\documentclass[%
   , colorlinks % also like hyperref with CSC colorscheme (this is the default)
]{mpi2015-cscpreprint}

\usepackage{hyperref}
\usepackage[american]{babel}

\usepackage{graphicx}

\usepackage[T1]{fontenc}
\usepackage{lmodern}

\usepackage{amssymb}
\usepackage{amsthm}
\usepackage{mymacros}

\AtBeginDocument{
  \newtheorem{theorem}{Theorem}[section]
  \newtheorem{lemma}[theorem]{Lemma}
  \newtheorem{definition}[theorem]{Definition}
  
  \newtheorem{corollary}[theorem]{Corollary}
  \newtheorem{remark}[theorem]{Remark}
}

\newcommand{\eg}{e.\,g.,\ }
\newcommand{\ie}{i.\,e.,\ }
\newcommand{\M}{\mathcal{M}}
\newcommand{\T}{\mathcal{T}}
\renewcommand{\S}{\mathcal{S}}

\newcommand{\Hn}{ {\mathbb{H}_n} }   % Hermitian n x n
\newcommand{\XWpd}{ {\mathbb{X}^{\raisebox{0.2em}{{\fontsize{3}{2}\selectfont $>$}}}} }  %Hermitian pos def solns to weak LMI: W(X) \geq 0
\newcommand{\XWpdpd}{ {\mathbb{X}^{\raisebox{0.2em}{{\fontsize{3}{2}\selectfont $\gg$}}}} }  %Hermitian pos def solns to strict LMI: W(X)>0
\begin{document}

%%%%%%%%%%%%%%%%%%%%%%%%%%%%%%%%%%%%%%%%%%%%%%%%%%%%%%%%%%%%%%%%%%%%%%%%%%%%%%%%
% PAPER INFORMATION.                                                           %
%%%%%%%%%%%%%%%%%%%%%%%%%%%%%%%%%%%%%%%%%%%%%%%%%%%%%%%%%%%%%%%%%%%%%%%%%%%%%%%%

\title{Robust forms for passive descriptor systems}

\author[$\ast$]{Peter Benner}
\affil[$\ast$]{Max Planck Institute for Dynamics of Complex Technical Systems, 39106 Magdeburg, Germany. \authorcr%
  \email{benner@mpi-magdeburg.mpg.de}, \orcid{0000-0003-3362-4103}}% chktex 8

\author[$\ast$]{Anshul Prajapati}
\affil[$\ast$]{Max Planck Institute for Dynamics of Complex Technical Systems, 39106 Magdeburg, Germany.\authorcr%
  \email{prajapati@mpi-magdeburg.mpg.de}, \orcid{0000-0002-7641-5304
}}% chktex 8

\author[$\dagger$]{Paul Van Dooren}
\affil[$\dagger$]{Department of Mathematical Engineering, Universit{\'e} catholique de Louvain, Louvain-la-Neuve, Belgium.\authorcr%
  \email{paul.vandooren@uclouvain.be}, \orcid{0000-0002-0115-9932}}% chktex 8

\shorttitle{Robust forms of passive descriptor systems}
\shortauthor{P. Benner, A. Prajapati, P. Van Dooren}
\shortdate{}

\keywords{linear matrix inequality, passivity radius, robustness}

\msc{93D09, 93C05, 49M15, 37J25}

% \author{Peter Benner\thanks{Max Planck Institute for Dynamics of Complex Technical Systems, Magdeburg, Germany}, Anshul Prajapati\thanks{Max Planck Institute for Dynamics of Complex Technical Systems, Magdeburg, Germany}, Paul Van Dooren\thanks{Department of Mathematical Engineering, Universit{\'e} catholique de Louvain, Louvain-la-Neuve, Belgium}
% }

\abstract{We show how to compute the passivity radius of a given generalized state-space model of a rational transfer function that represents a passive system, both in the continuous-time and discrete-time cases. 
We then use this to construct robust descriptor realizations. For the continuous-time case, these realizations have a port-Hamiltonian descriptor form. For the discrete-time case a similar result holds for the so-called normalized realizations. The computation of such robust realizations is linked in both these cases to a particular solution of a linear matrix inequality that characterizes the passivity of the transfer function.}
    
\novelty{This paper extends the standard state-space results of the papers \cite{MehV19} and \cite{MehV20} to also handle the descriptor case.}

\maketitle

\section{Introduction}

We consider realizations of linear dynamical systems with a rational transfer function that is \emph{positive-real or passive}. In particular, we study positive-real transfer functions  which play a fundamental role in systems and control theory: they represent \eg spectral density functions of stochastic processes and arise in spectral factorizations.
Positive-real transfer functions can be described using convex sets, and this property has led to the extensive use of convex optimization techniques in this area \cite{BoyEFB94}.
\emph{Passive} systems and their relationships with \emph{positive-real transfer functions} are well studied, starting with the works  \cite{Kal63,Pop73,Wil72a,Wil72b}, and the topic has recently received considerable attention in the context of \emph{port-Hamiltonian system models} \cite{Sch04,SchJ14}.

In this paper we look at generalized state-space realizations of linear time-invariant and positive-real transfer functions. We characterize the so-called passivity radius of a generalized state-space model as the smallest perturbation of the model parameters that makes the system loose the passivity property. We show that there is a subset of such realizations that achieves near optimal robustness, by maximizing its \emph{passivity radius}. 
We derive these results for both continuous-time and discrete-time models. 

In the continuous-time (CT) case we consider linear time-invariant systems described by the equations
\begin{equation} \label{statespace_c}
 \begin{array}{rcl}  E \dot x(t) & = & Ax(t) + B u(t),\ x(0)=x_0,\\
y(t)&=& Cx(t)+Du(t),
\end{array}
\end{equation}
which can also be described by the associated  transfer function
in the {\em Laplace variable $s$~:}
\begin{equation}\label{tf_c}
\mathcal T(s)= D+C(sE-A)^{-1}B.
\end{equation}
In the discrete-time (DT) case we consider linear time-invariant systems described by the equations
\begin{equation} \label{statespace_d}
 \begin{array}{rcl} E x(k+1) & = & Ax(k) + B u(k),\ x(0)=x_0,\\
y(k) &=& Cx(k)+Du(k),
\end{array}
\end{equation}
which now has an associated transfer function in the \emph{time delay} $z$~: 
\begin{equation*} %\label{tf_d}
\mathcal T(z)= D+C(zE-A)^{-1}B.
\end{equation*}
When discussing results that apply to both cases, we will use the generic variable $\lambda$ instead of
$s$ (reserved for CT systems) and $z$ (reserved for DT systems). The above models will in both cases be denoted by the quintuple
\begin{equation} %\label{eq:model}
    \M:=\{A,B,C,D,E\}.
\end{equation}
Here $u:\mathbb R\to\mathbb{C}^m$,   $x:\mathbb R\to \mathbb{C}^n$,  and  $y:\mathbb R\to\mathbb{C}^m$  are vector-valued functions denoting, respectively, the \emph{input}, \emph{state},
and \emph{output} of the system. Denoting real and complex $n$-vectors ($n\times m$ matrices) by $\mathbb R^n$, $\mathbb C^{n}$ ($\mathbb R^{n \times m}$, $\mathbb{C}^{n \times m}$), respectively, the coefficient matrices satisfy $A, E \in \mathbb{C}^{n \times n}$,   $B\in \mathbb{C}^{n \times m}$, $C\in \mathbb{C}^{m \times n}$, and  $D\in \mathbb{C}^{m \times m}$. When the variables $x$, $u$ and $y$ are real vectors, the system model can be chosen real as well, but the analysis is essentially the same (see e.g. Subsection \ref{sub:real}).

We restrict ourselves to systems which are \emph{minimal}, \ie the pair $(\lambda E-A,B)$ is \emph{controllable} (for all $\lambda \in \mathbb C$, $\rank{[ \lambda E-A \; | \; B]}=n$), and the pair $(\lambda E-A,C)$ is \emph{observable} (\ie $(\lambda E^\mathsf{H}-A^\mathsf{H},C^\mathsf{H})$ is controllable). Here, the Hermitian transpose and the transpose of a vector or matrix $V$ is denoted by
$V^{\mathsf{H}}$ and $V^{\mathsf{T}}$, respectively, and the identity matrix is denoted by $I_n$ or $I$ if the dimension is clear from the context.
Throughout this article we will use the following notation.
We denote the set of Hermitian matrices in $\mathbb{C}^{n \times n}$ by $\Hn$.
Positive definiteness (semi-definiteness) of  $M\in \Hn$ is denoted by $M\succ 0$ ($M\succeq 0$). The smallest and largest eigenvalues of a matrix $M\in\Hn$ will be denoted by $\lambda_{\min}(M)$ and $\lambda_{\max}(M)$, respectively.
The real and imaginary parts of a complex matrix $Z$ are written as $\Re (Z)$ and $\Im (Z)$, respectively, and $\imath$ is the imaginary unit.
We consider functions over $\Hn$, which is a vector space if considered as a \emph{real} subspace of $\mathbb{R}^{n\times n}+\imath \mathbb{R}^{n\times n}$.

The paper is organized as follows. In Section~\ref{sec:PH} and its subsections, we characterize the classes of passive descriptor systems, of port-Hamiltonian (pH) descriptor systems, and of normalized descriptor systems. We then introduce in Section~\ref{sec:passrad} the notion of passivity radius and of $X$-passivity radius of passive systems and
show an important optimality property of normalized systems. In Section~\ref{sec:maxpass} we show how to find the optimal certificate $X$ to maximize the passivity radius of a transfer function model. In the last section, we briefly discuss how to find models for a given transfer function, that is as robust as possible in the sense that its passivity radius is maximal. We then give some concluding remarks in a final Section~\ref{sec:conclude}.

\section{Passive systems} \label{sec:PH}

\subsection{The continuous-time case} \label{sub:CT}

The concepts of \emph{positive-realness} and \emph{passivity} are well studied in the CT case. We briefly recall some important properties following \cite{SchJ14}, and refer to the literature for a more detailed survey. Consider a stable, continuous-time system as in (\ref{statespace_c})  and its transfer function $\mathcal T(s)$ as in \eqref{tf_c}. We define the matrix-valued rational function
\[
\Phi_c(s):= \mathcal T^{\mathsf{H}}(-s) + \mathcal T(s),
\]
which coincides with the Hermitian part of ${\mathcal T}(s)$ on the imaginary axis~:
\[  \Phi_c(\imath\omega):= [\mathcal T(\imath\omega)]^{\mathsf{H}} + \mathcal T(\imath \omega).
\]
\begin{definition}\label{definition:passivity}
A stable transfer function $\mathcal T(s)$ is {\em strictly positive-real} \/if $\Phi_c(\imath \omega)\succ 0 $ for all $ \omega\in \mathbb{R}$ and 
it is \emph{positive-real} if $\Phi_c(\imath \omega)\succeq 0 $ for all $\ \omega\in \mathbb{R}$.\footnote{In the literature, a different definition is used, but for stable and rational continuous-time transfer functions, both definitions are equivalent.}

The transfer function $\mathcal T(s)$ is {\em asymptotically stable} if the eigenvalues of $sE-A$ are in the open left half plane, and it is 
{\em stable} if the eigenvalues of $sE-A$ are in the closed left half plane, with any eigenvalues occurring on the imaginary axis being simple.

The transfer function $\mathcal T(s)$ is {\em strictly passive} \/if  it is strictly positive-real and asymptotically stable and it is 
\emph{passive} if it is positive-real and stable. 
\end{definition}

The rational matrix function $\Phi_c(s)$ is the Schur complement of the so-called continuous-time {\em system pencil}
\begin{equation*} %\label{pencil_Lc}
L_c(s) :=
\left[ \begin{array}{cc|c} 0 & A-sE & B \\
A^{\mathsf{H}}+sE^\mathsf{H} & 0 & C^{\mathsf{H}} \\ \hline B^{\mathsf{H}} & C & D^{\mathsf{H}}+D  \end{array} \right],
\end{equation*}
and if the model $\M$ is minimal, then the finite generalized eigenvalues of $L_c(s)$ are the finite zeros of $\Phi_c(s)$.
The following transformation using an arbitrary matrix $X \in \Hn$, leaves the Schur complement, and hence also the rational matrix function $\Phi_c(s)$, unchanged
\begin{equation} \label{Ss}
\left[ \begin{array}{cc|c} 0 & A-sE & B \\
A^{\mathsf{H}}+sE^\mathsf{H} & -A^\mathsf{H}XE - E^{\mathsf{H}}XA & C^{\mathsf{H}} - E^{\mathsf{H}}XB \\ \hline B^{\mathsf{H}} &
C- B^{\mathsf{H}}XE & D^{\mathsf{H}}+D \end{array} \right] \end{equation}
$$ =
\left[ \begin{array}{cc|c} I_n & 0 & 0 \\
-E^\mathsf{H}X & I_n & 0 \\ \hline 0 & 0 & I_m  \end{array} \right] L_c(s)
\left[ \begin{array}{cc|c} I_n & -XE & 0 \\
0 & I_n & 0 \\ \hline 0 & 0 & I_m  \end{array} \right].
$$
Let us now denote by $W_c(X,\M)$ the following submatrix of \eqref{Ss}
\begin{equation} \label{KYP-LMI_c}
W_c(X,\M) := \left[
\begin{array}{cc}
-A^\mathsf{H}XE - E^{\mathsf{H}}XA & C^{\mathsf{H}} - E^{\mathsf{H}}XB \\
C- B^{\mathsf{H}}XE & D^{\mathsf{H}}+D 
\end{array}
\right].
\end{equation}
Then, it can be shown that
\[
\Phi_c(s) =
\left[ \begin{array}{cc} B^{\mathsf{H}}(A^\mathsf{H} + sE^{\mathsf{H}})^{-1} & -I_m  \end{array} \right]
\, W_c(X,\M) \left[ \begin{array}{c} (A-sE)^{-1}B \\ -I_m \end{array} \right], 
\]
and that $\mathcal T(s)$ is positive-real and stable if and only if there exists a positive definite matrix $X\in \Hn$ such that the following LMI holds 
\begin{equation} \label{LMI_c}
W_c(X,\M) \succeq 0.
\end{equation}
Such a matrix $X$ is also called a {\em certificate} of the LMI. 
We point out that when $E$ is singular, $\Phi_c(\infty)$ is singular as well and $\T(s)$ can not be strictly passive. Hence strict passivity implies that $E$ is invertible.

In this context we will make use of the sets
\begin{subequations} \label{LMIsolnsets}
\begin{align}
&\XWpd :=\left\{ X\in \Hn \left|   W(X) \succeq 0,\ X \succ 0 \right.\right\}, %\label{XpdsolnWpsd} 
\\[1mm]
&\XWpdpd :=\left\{ X\in \Hn \left|   W(X) \succ 0,\ X \succ 0 \right.\right\}, %\label{XpdsolWpd}
\end{align}
\end{subequations}
where the LMI matrix $W(X)$ stands for both $W_c(X,\M)$ (in the CT case)
and $W_d(X,\M)$ (in the DT case, see Section\ref{sub:DT}).
A CT system $\M :=\left\{A,B,C,D,E\right\}$ is  \emph{passive} if there exists a state-dependent
\emph{storage function}, $\mathcal H(x) \geq 0$, such that for any $t_1,t_0\in \mathbb R$ with $t_1>t_0$,
 the \emph{dissipation inequality}
\begin{equation} \label{supply} \mathcal H(x(t_1))-\mathcal H(x(t_0)) \le \int_{t_0}^{t_1} \Re (y(t)^{\mathsf{H}}u(t)) \, dt
\end{equation}
holds.
If for all $t_1>t_0$, the inequality in \eqref{supply}
is strict then the system is called \emph{strictly passive}. Note that the above dissipation inequality is equivalent to (strict) passivity of the associated transfer function, as defined in Definition~\eqref{definition:passivity}.

If $D^{\mathsf{H}}+D$ is invertible, then
the minimum rank solutions of \eqref{LMI_c} in $\XWpd$
are those for which $\rank{W_c(X,\M)} = \rank{D^{\mathsf{H}}+D}  = m$, which in turn is the case
if and only if the Schur complement of $D^{\mathsf{H}}+D$ in $W_c(X,\M)$ is zero.  This Schur
complement is associated with the continuous-time \emph{algebraic Riccati equation (CARE)}
\begin{equation} \label{riccatic}
- A^{\mathsf{H}}XE -E^{\mathsf{H}}XA -(C^{\mathsf{H}}-E^\mathsf{H}XB)(D^{\mathsf{H}}+D)^{-1}(C-B^{\mathsf{H}}XE)=0.
\end{equation}
Solutions $X$ to (\ref{riccatic}) yield a spectral factorization of $\Phi_c(s)$, and each solution corresponds to an invariant subspace 
of the so-called \emph{Hamiltonian matrix} 
\begin{equation*}%\label{HamMatrix}
H:=\left[\begin{array}{cc} AE^{-1}-B (D^{\mathsf{H}}+D)^{-1} CE^{-1} & - B (D^{\mathsf{H}}+D)^{-1} B^{\mathsf{H}} \\
E^\mathsf{{-H}}C^{\mathsf{H}} (D^{\mathsf{H}}+D)^{-1} CE^{-1} & -(AE^{-1}-B (D^{\mathsf{H}}+D)^{-1} CE^{-1})^{\mathsf{H}} \end{array} \right].
\end{equation*}
Notice that we require $E$ to be invertible in order to write the Hamiltonian matrix $H$ in that form.
It is shown in \cite{Wil71} that for a minimal system $\M$, the set of solutions $X$ of the Riccati equation \eqref{riccatic} has two extremal solutions
$X_+$ and $X_-$ such that  all other solutions $X$ satisfy 
\begin{equation} \label{eq:Ricbound}
     \infty I \succ X_+ \succeq X \succeq X_- \succ 0 
\end{equation}
Notice that this implies that the set $\XWpd$ is a bounded convex set.

\subsection{Port-Hamiltonian systems} \label{sub:PH}
A special class of realizations of CT passive systems is that of \emph{port-Hamiltonian systems}.
\begin{definition}\label{def:ph}
A linear time-invariant \emph{port-Hamiltonian (pH) descriptor system} has the generalized state-space form
\begin{equation} \label{eq:pH}
 \begin{array}{rcl} E \dot x(t)  & = & (J-R)Q x(t) + (G-K) u(t),\\
y(t) &=& (G+K)^{\mathsf{H}}Q x(t)+(S+N)u(t),
\end{array}
\end{equation}
and the system model matrices satisfy the structural conditions
\[
\mathcal V:= \left[ \begin{array}{cccc} -J & -G \\ \!G^{\mathsf{H}} \! & N \end{array} \right]=-\mathcal V^{\mathsf{H}},\
\mathcal W:= \!\left[ \begin{array}{cccc} R & \!K \! \\ K^{\mathsf{H}} & S \end{array} \right] \!=\mathcal W^{\mathsf{H}}\succeq 0, \ E^{\mathsf{H}}Q=Q^{\mathsf{H}}E \succeq 0.
\]
\end{definition}
Port-Hamiltonian descriptor systems were introduced from a different point of view in \cite{BeattieMXZ}, but they also have a storage function $\mathcal{H}(x):=\frac12(x^\mathsf{H}Q^\mathsf{H}Ex)$ and satisfy a dissipation inequality, and hence they are passive. Thus,  there must be a coordinate transformation between a passive system and a representation ({\ref{eq:pH}) as a pH descriptor system. We give here a  construction of such a possible transformation.

Consider a minimal descriptor model $\M:=\{A,B,C,D,E\}$ of a passive linear time-invariant system and choose $Q:=c^2 E, \; c>0$. The condition $E^\mathsf{H}Q=Q^\mathsf{H}E\succeq 0$ is then satisfied and the storage function equals $\mathcal{H}(x):=\frac12(\|cEx\|_2^2)$.
One can then always apply a generalized state-space transformation as follows
\begin{equation} \label{gsst}
 \{\hat A,\hat B,\hat C,\hat D,\hat E\}:= \{TAS, TB, CS, D,TES \},  \quad \det S\neq 0, \;\; \det T \neq 0,
\end{equation}
and choose $S$ and/or $T$, without affecting the transfer function of the model. 
Let us now look at the remaining parameters of the model. Since the transfer function is passive, there is a solution $X\in \XWpd$ of the LMI \eqref{LMI_c}.
We then use a symmetric factorization $X= c^2T^{\mathsf{H}}T$, where $c>0$, and define a new realization
\begin{equation*}
    %\label{gsstT}
\M_T := \{A_T, B_T, C_T, D_T,E_T \}:= \{TAT^\mathsf{H}, TB, CT^\mathsf{H}, D,TET^\mathsf{H} \}.
\end{equation*}
Moreover, the LMI \eqref{LMI_c} transforms to 
\[\
\left[ \begin{array}{cccc} T & 0\\ 0 & I_m
\end{array}
\right]
\left[ \begin{array}{cccc} -A^{\mathsf{H}}XE-E^{\mathsf{H}}XA & C^{\mathsf{H}}-E^{\mathsf{H}}XB \\ C-B^{\mathsf{H}}XE & D^{\mathsf{H}}+D
\end{array}
\right]
\left[ \begin{array}{cccc} T^{\mathsf{H}} & 0\\ 0 & I_m
\end{array}
\right] \succeq 0 ,
\]
which can be rewritten as
\begin{equation} \label{gssPH}
 W_c(c^2I_n,\M_T) := \left[ \begin{array}{cc} c^2E_T^{\mathsf{H}} &  \\ 
 & I_m \end{array} \right]
\left[ \begin{array}{cccc}-A_T & -B_T \\ C_T & D_T
\end{array}
\right]    
+
\left[ \begin{array}{cccc} -A_T^\mathsf{H} & C_T^\mathsf{H} \\ - B_T^\mathsf{H} & D_T^\mathsf{H}
\end{array}
\right]  \left[ \begin{array}{cccc} c^2E_T &  \\ & I_m
\end{array} 
\right] 
\succeq 0. 
\end{equation}
We can then use the Hermitian and skew-Hermitian part of the matrix
\[
 \S := \left[ \begin{array}{cccc}-A_T & -B_T \\ C_T & D_T
\end{array}
\right]  \left[ \begin{array}{cc} c^2E_T &  \\ & I_m
\end{array} \right]^{-1}
= \left[ \begin{array}{cccc}-A_T & -B_T \\ C_T & D_T
\end{array}
\right]  \left[ \begin{array}{cc} Q_T &  \\ & I_m
\end{array} \right]^{-1}
\] 
to define the coefficients of a pH representation via
\[
 \left[ \begin{array}{cccc} R & K \\ K^{\mathsf{H}} & S
\end{array}
\right] := \frac{{\S} +{\S}^{\mathsf{H}}}{2} \succeq 0, \quad \left[ \begin{array}{cccc} -J &  -G \\ G^{\mathsf{H}} & N
\end{array}
\right] :=  \frac{{\S} - {\S}^{\mathsf{H}}}{2},
\]
and which corresponds to the descriptor pH model \eqref{eq:pH}
with $Q_T = c^2 E_T$. The generalized state-space equivalence transformation that was used for this is of the form 
\eqref{gsst} and it preserves the positive semi-definiteness of the storage function. Note that meanwhile, we constructed a coordinate system in which the LMI is satisfied with a ``normalized" certificate $X=c^2I_n$.
Also, the factor $T$ is unique up to a unitary factor $U$, since $T^{\mathsf{H}}U^{\mathsf{H}}UT=T^{\mathsf{H}}T$, but this factor $U$ will not affect the results described in this paper.

\subsection{The DT case} \label{sub:DT}

The concept of \emph{passivity} for discrete-time systems is described in  \cite{Wil72b}, \cite{MehV20}, and we refer to those papers for proofs and for a more detailed survey. Consider a discrete-time system (\ref{statespace_d}) with minimal generalized state-space model $$\M:=\{A,B,C,D,E\}$$ 
and transfer function $\mathcal T(z):=C(zE-A)^{-1}B+D$ and define the rational matrix function of $z\in \mathbb{C}$~: 
\[ \Phi_d(z):=\mathcal T^{\mathsf{H}}(z^{-1}) + \mathcal T(z), \] which coincides with the Hermitian part of $\mathcal T(z)$ on the  unit circle:
\[ \Phi_d(e^{\imath \omega})=[\mathcal T(e^{\imath \omega})]^\mathsf{H} + \mathcal T(e^{\imath \omega}). \] 

\begin{definition}
A stable transfer function $\mathcal T(z)$ is {\em strictly positive-real} \/if $\Phi_d(e^{\imath \omega})\succ 0 $ for all $\ \omega\in [-\pi,\pi]$ and 
it is \emph{positive-real} if $\Phi_d(e^{\imath \omega})\succeq 0 $ for all $\ \omega\in [-\pi,\pi]$.\footnote{In the literature, a different definition is used, but for stable and rational discrete-time transfer functions, both definitions are equivalent.}

The transfer function $\mathcal T(z)$ is {\em asymptotically stable} if the eigenvalues of $zE-A$ are in the open unit disc, and it is 
{\em stable} if the eigenvalues of $zE-A$ are in the closed unit disc, with any eigenvalues occurring on the unit circle being simple.

The transfer function $\mathcal T(z)$ is {\em strictly passive} \/if  it is strictly positive-real and asymptotically stable and it is 
\emph{passive} if it is positive-real and stable. 
\end{definition}

The rational matrix function $\Phi_d(z)$ is the Schur complement of the so-called discrete-time {\em system pencil}
\begin{equation*} %\label{pencil_Lz}
L_d(z) :=
\left[ \begin{array}{cc|c} 0 & A-zE & B \\
zA^{\mathsf{H}}-E^\mathsf{H} & 0 & C^{\mathsf{H}} \\ \hline zB^{\mathsf{H}} & C & D^{\mathsf{H}}+D  \end{array} \right]
\end{equation*}
and if the model $\M$ is minimal, then the finite generalized eigenvalues of $L_d(z)$ are the finite zeros of $\Phi_d(z)$.
The following transformation using an arbitrary matrix $X \in \Hn$, leaves the Schur complement, and hence also the rational matrix function $\Phi_d(z)$, unchanged
\begin{equation} \label{Sz}
\left[ \begin{array}{cc|c} 0 & A-zE & B \\
zA^{\mathsf{H}}-E^\mathsf{H} & E^\mathsf{H}XE - A^{\mathsf{H}}XA & C^{\mathsf{H}} - A^{\mathsf{H}}XB \\ \hline zB^{\mathsf{H}} &
C- B^{\mathsf{H}}XA & D^{\mathsf{H}}+D -B^{\mathsf{H}}XB \end{array} \right] \end{equation}
$$ =
\left[ \begin{array}{cc|c} I_n & 0 & 0 \\
-A^\mathsf{H}X & I_n & 0 \\ \hline -B^\mathsf{H}X & 0 & I_m  \end{array} \right] L_d(z)
\left[ \begin{array}{cc|c} I_n & -XE & 0 \\
0 & I_n & 0 \\ \hline 0 & 0 & I_m  \end{array} \right].
$$
Let us now denote by $W_d(X,\M)$ the following submatrix of \eqref{Sz}
\begin{equation} \label{KYP-LMI_d}
W_d(X,\M) := \left[
\begin{array}{cc}
E^\mathsf{H}XE - A^{\mathsf{H}}XA & C^{\mathsf{H}} - A^{\mathsf{H}}XB \\
C- B^{\mathsf{H}}XA & D^{\mathsf{H}}+D -B^{\mathsf{H}}XB
\end{array}
\right].
\end{equation}
Then it can be shown that
\[
\Phi_d(z) =
\left[ \begin{array}{cc} B^{\mathsf{H}}(A^{\mathsf{H}}-z^{-1}\,E^\mathsf{H} )^{-1} & -I_m  \end{array} \right]
\, W_d(X,\M) \left[ \begin{array}{c} (A-z\,E)^{-1}B \\ -I_m \end{array} \right], 
\]
and that $\mathcal T(z)$ is positive-real and stable if and only if there exists a positive definite matrix $X\in \Hn$, called a {\em certificate}, such that the following LMI holds 
\begin{equation} \label{LMI_d}
W_d(X,\M) \succeq 0.
\end{equation} 

An important subset of ${\XWpd}$ are those certificates for which the
rank $r$ of $W_d(X,\M)$ is minimal ({\ie} for which $r=\rank{\Phi_d(z)}$).
If $D^{\mathsf{H}}+D-B^{\mathsf{H}}XB$ is invertible, then
the minimum rank solutions in $\XWpd$
are those for which $\rank{W_d(X,\M)} = \rank{D^{\mathsf{H}}+D-B^{\mathsf{H}}XB}  = m$, which in turn is the case
if and only if the Schur complement of $D^{\mathsf{H}}+D-B^{\mathsf{H}}XB$ in $W_d(X,\M)$ is zero.  This Schur complement is associated with the discrete-time \emph{algebraic Riccati equation (DARE)}
\begin{equation}
 E^\mathsf{H}XE -A^{\mathsf{H}}XA - (C^{\mathsf{H}}-A^{\mathsf{H}}XB)(D^{\mathsf{H}}+D-B^{\mathsf{H}}XB)^{-1}(C-B^{\mathsf{H}}XA)=0.\label{riccatid}
\end{equation}
Solutions $X$ to (\ref{riccatid}) produce a spectral factorization of $\Phi_d(z)$, and each solution corresponds to an
\emph{invariant subspace} 
that remains invariant under the multiplication with the matrix
\begin{equation*} %\label{SymMatrix}
S :=\left[\begin{array}{cc} E &  B (D^{\mathsf{H}}+D)^{-1} B^{\mathsf{H}} \\
 0 & (A-B (D^{\mathsf{H}}+D)^{-1} C)^{\mathsf{H}} \end{array}\right]^{-1} \left[\begin{array}{cc} A-B (D^{\mathsf{H}}+D)^{-1} C & 0 \\
C^{\mathsf{H}} (D^{\mathsf{H}}+D)^{-1} C & E^\mathsf{H} \end{array}\right].
\end{equation*}
Such a subspace is called a \emph{Lagrangian invariant subspace} and the 
matrix $S$ has a \emph{symplectic structure} (see e.g., \cite{Meh91}).
Each solution $X$ of \eqref{riccatid} can also be associated with an \emph{extended Lagrangian invariant subspace}
for the pencil $L_d(z)$, spanned by the columns of
$ \widehat{U}:=\left[\begin{array}{ccc} -X^{\mathsf{T}}
& I_n & -F^{\mathsf{T}} \end{array}\right]^{\mathsf{T}}$.
 In particular, $\widehat{U}$ satisfies
\[
\left[ \begin{array}{ccc} 0 & A & B \\
	-E^\mathsf{H} & 0 & C^{\mathsf{H}} \\  0 & C & D^{\mathsf{H}}+D  \end{array} \right] \widehat{U}
  =\left[ \begin{array}{ccc} 0 & E & 0\\
	-A^{\mathsf{H}} & 0 & 0\\ -B^{\mathsf{H}} & 0 & 0 \end{array} \right] \widehat{U} A_{F}.
\]
If $D^{\mathsf{H}}+D-B^{\mathsf{H}}XB$ is not invertible
then more complicated constructions are necessary, see \cite{Meh91}.

In the continuous-time case, the definition of a passive systems has its origin in network theory, but its formal definition is 
associated with the existence of a storage function and a particular dissipation inequality.
The equivalent concept for the discrete-time case again follows from the LMI \eqref{LMI_d}.
If we define the vector $z_k$ as the stacked vector of the state $x_k$ above the input $u_k$, and construct the inner product
$z_k^{\mathsf{H}} W_d(X,\M) z_k$, then we obtain the inequality
\begin{equation*} %\label{dissipation}
 \frac12 \left(x_k^\mathsf{H}E^\mathsf{H}Ex_k - x_{k+1}^\mathsf{H}E^\mathsf{H}Ex_{k+1}\right) + y_k^{\mathsf{H}}u_k + u_k^{\mathsf{H}}y_k =  z_k^{\mathsf{H}}W_d(X) z_k \ge 0.
\end{equation*}
Using the quadratic storage function $\mathcal H(x_i):=\frac12 \|Ex_i\|^2_2$ this yields a dissipation inequality 
$$  \mathcal H(x_{k})- \mathcal H(x_{0}) \le \sum_{i=0}^{k-1} \Re(y_i^{\mathsf{H}}u_i) 
$$
that is similar to the one of the continuous-time formulation. 
It follows from the continuous-time literature \cite{Wil72b} and the bilinear transformation between continuous-time and discrete-time systems  that if the system $\M$ of (\ref{statespace_d}) is minimal, then the LMI \eqref{LMI_d}
has a solution $X \succ 0$ if and only if $\M$ is a passive system. Moreover, the solutions of \eqref{LMI_d} also satisfy the matrix inequalities
\eqref{eq:Ricbound}, and therefore they form a bounded set, which we call $\mathbb{X}^\pm$. We thus have the following inclusions
$$ \XWpdpd  \subset \XWpd  \subset \mathbb{X}^\pm .
$$ 
Notice also that the $(1,1)$ block in the LMI \eqref{LMI_d} 
is a discrete-time Lyapunov equation with $X\succ 0$. This implies that $(E,A)$ is asymptotically stable if $W_d(X,\M) \succ 0$ and is stable if $W_d(X,M)\succeq 0$, see also \cite{LanT85}. 
It is also known that if the system is strictly passive, meaning that $\Phi_d(e^{\imath \omega})\succ 0$ for the whole unit circle, then $X_+\succ X_-$.

The bilinear transformation between continuous-time and discrete-time systems preserves the solution sets $\XWpdpd$ and $\XWpd$
as well as the solutions $X_+$ and $X_-$ of the Riccati equation. It was shown \eg in \cite{MehV19} that the set $\mathbb{X}^\pm$ has a nonempty interior if and only if $X_+ \succ X_-$. Since  $\XWpd$ is a subset of $\mathbb{X}^\pm$ it also follows $\XWpd$ has an empty interior when $X_+-X_-$ is singular. 
The proof for the discrete-time case again follows from the bilinear transformation.

\subsection{Normalized passive realizations} \label{sec:norm}
For the case of discrete-time passive systems, the class of realizations that have special properties, are the ones associated with a normalized certificate $X=c^2I_n$, where $c>0$. 

\begin{definition} \label{pH}
A {\em normalized passive system} has the generalized state-space form \eqref{statespace_d}
where the system matrices satisfy the matrix inequality
\begin{equation*} 
W_d(c^2I_n,\M_T):=\left[\begin{array}{cccc} c^2E_T^\mathsf{H}E_T & C_T^{\mathsf{H}} \\ C_T & D_T^{\mathsf{H}}+D_T\end{array}\right]-
c^2\left[\begin{array}{cc} A_T^{\mathsf{H}} \\ B_T^{\mathsf{H}} \end{array}\right]\left[\begin{array}{cc} A_T & B_T \end{array}\right]
  \succeq 0, \quad c>0.
\end{equation*}
\end{definition}
We now show that every passive system has an equivalent normalized passive realization.
Consider a minimal state-space model  $\M:=\{A,B,C,D,E\}$ of a passive linear time-invariant system and let  $X\in \XWpd$ be a solution of the LMI \eqref{LMI_d}. 
We then use the symmetric factorization $X= c^2 T^{\mathsf{H}}T$ for some $c>0$, and define a new generalized state-space realization
\[
\M_T:=\{A_T,B_T,C_T,D_T,E_T\} := \{TAT^\mathsf{H}, TB, CT^\mathsf{H}, D, TET^\mathsf{H}\}
\]
so that 
\begin{eqnarray*}  
\left[ \begin{array}{cccc} T & 0\\ 0 & I_m
\end{array}
\right]
\left[ \begin{array}{cccc} E^{\mathsf{H}}XE-A^{\mathsf{H}}XA & C^{\mathsf{H}}-A^{\mathsf{H}}XB \\ C-B^{\mathsf{H}}XA & D^{\mathsf{H}}+D-B^{\mathsf{H}}XB
\end{array}
\right]
\left[ \begin{array}{cccc} T^\mathsf{H} & 0\\ 0 & I_m
\end{array}
\right] \\
= 
\left[\begin{array}{cccc} c^2 E_T^{\mathsf{H}}E_T & C_T^{\mathsf{H}} \\ C_T & D_T^{\mathsf{H}}+D_T\end{array}\right]- c^2
\left[\begin{array}{cc} A_T^{\mathsf{H}} \\ B_T^{\mathsf{H}} \end{array}\right]\left[\begin{array}{cc} A_T & B_T \end{array}\right]
\succeq 0,
\end{eqnarray*}
which expresses that the transformed realization $\M_T$ is now normalized. 
Notice that the factor $T$ is unique up to a unitary factor $U$ since $T^{\mathsf{H}}U^{\mathsf{H}}UT=T^{\mathsf{H}}T$.
This unitary factor does not affect the normalization constraint, but we could choose it to put $A_T$ in a special coordinate system, like the upper triangular Schur form. 

Even after the normalization,  there is still a lot more freedom in the representation of the system, since we could have used any matrix $X$ from the set $\XWpd$ to normalize our realization.
In the remainder of this paper, we will focus on normalized passive realizations. The freedom remaining is thus the choice of the matrix $X$ from $\XWpd$, which, as we will see, can be used to make the representation more robust or less sensitive to perturbations. The remainder of this paper will deal with the question of how to make use of this freedom in the generalized state-space transformation to determine an improved normalized realization.

\subsection{Choices of representation}  \label{sub:real}

An important degree of freedom for descriptor systems is that the transfer function $\T(\lambda)$ does not change when scaling the system as follows \[ \hat\M:= \{\hat A,\hat B, \hat C, \hat D,\hat E\}:=
 \{c_1c_2A,c_1B,c_2C,D,c_1c_2E\} \quad c_1,c_2>0,  \]
since this is a special case of the general state-space transformation \eqref{gsst}. Exploiting such a degree of freedom has an important implication as explained in the following Lemma, which applies to both the CT and DT cases.
\begin{lemma}
Let the system model $\M:=\{A,B,C,D,E\}$ be normalized with certificate
$X:=c^2 I_n, \; c>0$. Then the scaled model 
    $$\M_c:=\{A_c,B_c,C_c,D_c,E_c\}:=\{cA,cB,C,D,cE\}$$
    is normalized with certificate $X=I_n$
\end{lemma}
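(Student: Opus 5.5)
The plan is a direct substitution, done separately for the CT and DT forms of the LMI. In each case I will show that the scaled model's LMI matrix at $X=I_n$ equals the original model's LMI matrix at $X=c^2I_n$. Positive semidefiniteness then transfers immediately.

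\emph{CT case.} Insert $\M_c=\{cA,cB,C,D,cE\}$ into \eqref{KYP-LMI_c} with $X=I_n$. The $(1,1)$ block is
$$-(cA)^{\mathsf{H}}(cE)-(cE)^{\mathsf{H}}(cA)=-A^{\mathsf{H}}(c^2I_n)E-E^{\mathsf{H}}(c^2I_n)A .$$
The off-diagonal block is
$$C^{\mathsf{H}}-(cE)^{\mathsf{H}}(cB)=C^{\mathsf{H}}-E^{\mathsf{H}}(c^2I_n)B .$$
The $(2,2)$ block $D^{\mathsf{H}}+D$ does not change. Hence $W_c(I_n,\M_c)=W_c(c^2I_n,\M)\succeq 0$.

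\emph{DT case.} Insert $\M_c$ into \eqref{KYP-LMI_d} with $X=I_n$. The blocks are
$$c^2E^{\mathsf{H}}E-c^2A^{\mathsf{H}}A,\qquad C^{\mathsf{H}}-cA^{\mathsf{H}}cB,\qquad D^{\mathsf{H}}+D-c^2B^{\mathsf{H}}B .$$
These are exactly the blocks of $W_d(c^2I_n,\M)$ in Definition~\ref{pH}. Hence $W_d(I_n,\M_c)=W_d(c^2I_n,\M)\succeq 0$.

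Two further remarks complete the argument. First, $I_n\succ 0$, so $I_n$ is a valid certificate of the scaled model. Second, $\M_c$ is the instance $c_1=c$, $c_2=1$ of the scaling considered above. It is therefore a transformation of the form \eqref{gsst} with $T=cI_n$ and $S=I_n$, so the transfer function and minimality are preserved and $\M_c$ realizes the same passive system.

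There is no real obstacle here. The only care needed is to check that the factor $c$ multiplies $B$ but not $C$. With that choice, every term containing $X$ picks up exactly $c^2$, and every term without $X$ is unchanged.
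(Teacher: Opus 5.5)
Your proof is correct and follows the paper's own argument: both check by direct substitution that $W(I_n,\M_c)=W(c^2I_n,\M)$ in the CT and DT cases. The paper writes the LMI matrices in factored block form, whereas you compare them block by block. Your DT $(1,1)$ block $c^2E^{\mathsf{H}}E-c^2A^{\mathsf{H}}A$ also has the correct sign, whereas the paper's displayed proof has a sign typo ($-c^2E^{\mathsf{H}}E$).
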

\begin{proof}
For the CT case this follows from
$$ W(c^2 I_n,\M)=\left[\begin{array}{cc}
 -c^2 E^\mathsf{H} & 0 \\ 0 & I_m 
\end{array} \right] \left[\begin{array}{cc}
 A & B \\ C & D \end{array} \right] + \left[\begin{array}{cc}
 A & B \\ C & D \end{array} \right] ^\mathsf{H} \left[\begin{array}{cc}
 -c^2 E & 0 \\ 0 & I_m 
\end{array} \right] \succeq 0, $$
which can be rewritten as
$$ W(I_n,\M_c)=\left[\begin{array}{cc}
 -c E^\mathsf{H} & 0 \\ 0 & I_m 
\end{array} \right] \left[\begin{array}{cc}
 cA & cB \\ C & D \end{array} \right] + \left[\begin{array}{cc}
 cA & cB \\ C & D \end{array} \right] ^\mathsf{H} \left[\begin{array}{cc}
 -c E & 0 \\ 0 & I_m 
\end{array} \right] \succeq 0. $$
For the DT case this follows from
$$ W(c^2 I_n,\M)=\left[\begin{array}{cc}
 -c^2 E^\mathsf{H}E & C^\mathsf{H} \\ C & D^\mathsf{H}+D 
\end{array} \right] - c^2\left[\begin{array}{cc}
 A^\mathsf{H} \\ B^\mathsf{H} \end{array} \right]\left[\begin{array}{cc}
 A & B \end{array} \right]  \succeq 0, $$
which can be rewritten as
$$ W(I_n,\M_c)=\left[\begin{array}{cc}
 -(cE^\mathsf{H})(cE) & C^\mathsf{H} \\ C & D^\mathsf{H}+D 
\end{array} \right] - \left[\begin{array}{cc}
 cA^\mathsf{H} \\ cB^\mathsf{H} \end{array} \right]\left[\begin{array}{cc}
 cA & cB \end{array} \right]  \succeq 0. $$
\end{proof}

This property also raises the question about bounding the contant $c$, both from below and above, since otherwise, we would obtain a disproportionate
quintuple $\{A,B,C,D,E\}$. Choosing $c^2=\det X$, \eg yields a transformation $T$ with $\det T=1$ and therefore preserves the determinant of the matrices $E$ and $A$ in our model. 
Choosing $c^2=\lambda_{\min}(X)$, yields $\sigma_{\min}(T)=1$, an option that we will use for optimizing the passivity radius.

When $E$ is assumed invertible, we could also have substituted $\hat x(t)= E x(t)$ and $Q=E$ in the pH representation \eqref{eq:pH} which would reduce the problem to one involving a standard state-space model (see \cite{MehV19}). There is even more freedom in the representation of the system, since we could have started from any matrix $X$ from the set $\XWpd$, to normalize the model, or in pH models we could have chosen a different matrix $Q$.

\medskip

We stress that when the model $\M$ is real, then all the definitions and properties discussed above still hold. Moreover, the sets $\XWpd$ and $\XWpdpd$ can be constrained to be
real without altering any of the results, since the real part $X_\Re$ of
a Hermitian matrix $X$ is symmetric and positive (semi-)definite whenever $X$ is Hermitian and (semi-)definite. When the model $\M$ is real it therefore follows that whenever $W(X)\succeq 0$ (or $W(X) \succ 0$) then we also have that $W(X_\Re)\succeq 0$ (or $W(X_\Re)\succ 0$), and it then suffices to verify these conditions over the real symmetric matrices only. When doing that, the corresponding pH realizations will also be real. Finally, we point out that the extremal solutions $X_+$ and $X_-$ of the Riccati equations are also real when the model $\M$ is real.

\section{The passivity radius}\label{sec:passrad}

In order to analyze the robustness of representations of a passive system we need to introduce a robustness measure, which we will call the \emph{passivity radius} $\rho_{\M}$ of the model $\M$~: it is the smallest perturbation (in an appropriate norm) to the coefficients of a model $\M $ that makes the system fail to be {\em strictly passive} (see also \cite{BeaMV19,MehV19,MehV20} where such a measure was already introduced).
For every certificate $X\in \XWpdpd$ of $W(X,\M) \succ 0 $ (meaning the LMI~\eqref{LMI_c} in the continuous-time case, and the LMI~\eqref{LMI_d} in the discrete-time case), we can look for the smallest perturbation $\Delta_\M$ to our model $\M $ that makes $\det W(X,\M+\Delta_\M)=0$, since this is what happens when  $W(X,\M+\Delta_\M)$ is not positive definite anymore. But the parameters of $\Delta_\M$ do not enter linearly in 
$W(X,\M+\Delta_\M)$ and the use of LMI techniques may then fail.
Therefore we will not perturb $E$ and only perturb the other coefficients.
We thus look for the smallest perturbation 
\begin{equation*}  %\label{eq:delta}
    \Delta_\M :=\{ \Delta_A,\Delta_B,\Delta_C,\Delta_D,0\},
\end{equation*} 
to our model $\M $ that makes $\det W(X,\M+\Delta_\M)=0$. 
To measure the size of the perturbation $\Delta_\M$ of a generalized state space model $\M $ we will use the Frobenius norm or the spectral norm
\begin{equation} \label{Delta}
\Delta := \left[\begin{array}{ccc}
\Delta_A & \Delta_B \\
\Delta_C & \Delta_D
\end{array}\right] , \quad 
 \|\Delta_\M \|_F := \left \| \Delta \right \|_F , \quad
 \|\Delta_\M \|_2 := \left \|
\Delta \right \|_2.
\end{equation}
We need to treat the CT and DT cases differently because the parameters of the model $\M$ need to appear linearly to allow the use of LMI techniques.

\subsection{The CT case}
As soon as we have identified a valid certificate $X$, we can consider it fixed, and when keeping $E$ fixed as well, the matrix \eqref{KYP-LMI_c} becomes linear in the remaining parameters $\{A,B,C,D\}$. Its perturbed
version then yields an LMI in the parameters of $\Delta_\M$~:
\[
 W_c(X,\M+\Delta_\M) = \left[
\begin{array}{cc}
 0 & (C+\Delta_C)^{\mathsf{H}} \\
(C+\Delta_C) & (D+\Delta_D)^{\mathsf{H}}+(D+\Delta_D)
\end{array}
\right]\]
\begin{equation} \label{perturb_c} - \left[\begin{array}{cc}
(A+\Delta_A)^{\mathsf{H}}XE + E^{\mathsf{H}}X(A+\Delta_A) & E^{\mathsf{H}}X\,(B+\Delta_B) \\
(B+\Delta_B)^{\mathsf{H}}XE & 0
\end{array}
\right]\succ 0.
\end{equation}
Notice that this LMI can be rewritten in a blocked form
\begin{equation}
\left[\begin{array}{cc} -E^{\mathsf{H}}X & 0 \\ 0 & I_m \end{array}\right]
\left[\begin{array}{cc} A+\Delta_A & B+\Delta_B \\ C +\Delta_C & D+\Delta_D \end{array}\right]
+ 
\left[\begin{array}{cc} A^{\mathsf{H}}+\Delta_A^{\mathsf{H}} & C^{\mathsf{H}}+\Delta_C^{\mathsf{H}} \\ B^{\mathsf{H}}+\Delta_B^{\mathsf{H}} & D^{\mathsf{H}}+\Delta_D^{\mathsf{H}} \end{array}\right]
 \left[\begin{array}{cc}-XE & 0 \\ 0 & I_m \end{array}\right]
 \succ 0, \label{hatXW}
\end{equation}
and by setting
\begin{equation} \label{defWX}
W_0:=W_c(X,\M), \quad F := \left[\begin{array}{cc} -XE & 0 \\ 0 & I_m \end{array}\right],\quad G := \left[\begin{array}{cc} I_n & 0 \\ 0 & I_m \end{array}\right], \quad \Delta := \left[\begin{array}{cc} \Delta_A & \Delta_B \\ \Delta_C & \Delta_D \end{array}\right],
\end{equation}
inequality \eqref{perturb_c} can be written in the more compact form
\begin{equation} \label{WDelta}
W_c(X,\M+\Delta_\M)=  W_0 +  \left[\begin{array}{cc} F^\mathsf{H} & G^\mathsf{H} \end{array}\right] \left[\begin{array}{cc} 0 & \Delta \\ \Delta^\mathsf{H} & 0 
\end{array}\right] \left[\begin{array}{c} F \\ G \end{array}\right] \succ 0 .
\end{equation}
This LMI has to hold as long as the system is still strictly passive. In order to violate this condition, we need to find the smallest $\Delta$ such that $\det W_c(X,\M+\Delta_\M) =0$. 

\subsection{The DT case} 

Even when $E$ is kept fixed, 
the matrix \eqref{KYP-LMI_d} appearing in the LMI condition for the DT case is
not linear in the unknown matrices $\{A,B,C,D\}$, but on the other hand, it is the Schur complement of the following (larger) LMI that is linear in these parameters~:
\begin{equation*} %\label{eq:largeLMI}
\widehat W_d(X,\M):=\left[\begin{array}{ccc} X &  XA & XB \\  A^{\mathsf{H}}X & E^{\mathsf{H}}XE & C^{\mathsf{H}} \\ B^{\mathsf{H}}X &
C & D^{\mathsf{H}}+ D  \end{array}\right] .
\end{equation*}
We point out that both LMI's $\widehat W_d(X,\M)\succ 0$ and $W_d(X,\M)\succ 0$ are satisfied by the same set of certificates, since 
$\widehat W_d(X,\M)$ is congruent to the block diagonal matrix $\diag{X,W_d(X,\M)}$.
We can thus consider the expanded LMI
\begin{equation*}
\widehat W_d(X,\M+\Delta_\M):= \left[\begin{array}{ccc} X &  X(A + \Delta_A) & X(B+\Delta_B) \\  (A^{\mathsf{H}}+\Delta_A^{\mathsf{H}})X & E^{\mathsf{H}}XE & C^{\mathsf{H}}+\Delta_C^{\mathsf{H}}  \\ (B^{\mathsf{H}}+\Delta_B^{\mathsf{H}})X &
C+\Delta_C & D^{\mathsf{H}}+\Delta_D^{\mathsf{H}} + D+\Delta_D  \end{array}\right] \succ 0,
\end{equation*}
which is now an LMI in the unknown parameters of $\Delta_\M$, for fixed 
matrices $X$ and $E$. Setting
\begin{equation*} %\label{defWhatX}
\widehat W_0:=\left[\begin{array}{ccc} X &  XA & XB \\  A^{\mathsf{H}}X & E^{\mathsf{H}}XE & C^{\mathsf{H}} \\ B^{\mathsf{H}}X &
C & D^{\mathsf{H}} + D  \end{array}\right], \quad  \left[\begin{array}{c} \widehat F \\  \widehat G \end{array}\right]:= 
\left[\begin{array}{ccc} X & 0 & 0 \\ 0 & 0 & I_m\\ \hline 0 & I_n & 0 \\ 0 & 0 & I_m \end{array}\right], 
\end{equation*}
and using the matrix $\Delta$ in \eqref{Delta}, this inequality can be written as the structured LMI
\begin{equation} \label{WDeltahat}
\widehat W_0 + \left[\begin{array}{cc} \widehat F^\mathsf{H} & \widehat G^\mathsf{H} \end{array}\right] \left[\begin{array}{cc} 0 & \Delta \\ \Delta^\mathsf{H} & 0 
\end{array}\right] \left[\begin{array}{c} \widehat F \\ \widehat G\end{array}\right] \succ 0,
\end{equation}
as long as the system is still strictly passive. In order to violate this condition, we need to find the smallest $\Delta$ such that the determinant of $\widehat W_d(X,\M+\Delta_\M)$ becomes 0. 
Clearly, this is now very similar to the CT case.

\subsection[The X-passivity radius]{The $X$-passivity radius}

We now define the so-called \emph{$X$-passivity radius}, which was introduced in \cite{BeaMV19} and gives a bound for the usual passivity radius. We will use in this subsection the notation $W(X,\M)$ for both the CT
matrix $W_c(X,\M)$ and the DT matrix $\widehat W_d(X,\M)$.
\begin{definition}
For $X\in \XWpdpd$ the  \emph{$X$-passivity radius} is defined as
\[
	\rho_\M(X):= \inf_{\Delta_\M\in \mathbb C^{n+m,n+m}}\left\{ \| \Delta_\M \| \; | \; \det W(X,\M+\Delta_\M) = 0\right\}.
\]
\end{definition}
Note that in order to compute $\rho_\M(X)$ for the model $\M $, we must have a matrix $X\in \XWpdpd$, since $W(X,\M)$  must be positive definite to start with and also $X$ should be positive definite to obtain a state-space transformation from it. The following relation between the $X$-passivity radius and the usual passivity radius was also given in \cite{BeaMV19}.
\begin{lemma} \label{lem:pasrad}
	The passivity radius for a given model $\M$ satisfies
	\begin{equation}
\nonumber
	\rho_{\M}:= \sup_{X\in \XWpdpd}\inf_{\Delta_\M\in \mathbb C^{n+m,n+m}}\{\| \Delta_\M \| | \det W(X,\M+\Delta_\M)=0\}= \sup_{X\in \XWpdpd} \rho_{\M}(X).\label{passive}
	\end{equation}
\end{lemma}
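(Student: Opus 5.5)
The first equality in Lemma~\ref{lem:pasrad} is essentially the definition of $\rho_\M$ and the second is the definition of $\rho_\M(X)$. The plan is therefore to justify the first equality. I will take $\rho_\M$ to be the smallest $\|\Delta_\M\|$ (with $E$ kept fixed) for which $\M+\Delta_\M$ is no longer strictly passive. I will use the characterization recalled in Section~\ref{sec:PH}: a minimal model is strictly passive exactly when $\XWpdpd(\M)$ is nonempty, i.e.\ when some $X\succ 0$ gives $W(X,\M)\succ 0$. In the DT case $\widehat W_d(X,\M)\succ 0$ is equivalent to $W_d(X,\M)\succ 0$ by the congruence with $\diag{X,W_d(X,\M)}$, so $\widehat W_d$ may be used throughout. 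The argument consists of two inequalities.

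\textbf{Lower bound $\rho_\M\ge \sup_X \rho_\M(X)$.} Fix $X\in\XWpdpd$ and any $\Delta_\M$ with $\|\Delta_\M\|<\rho_\M(X)$. The set $\{t\Delta_\M : t\in[0,1]\}$ lies inside the ball of radius $\rho_\M(X)$, so $\det W(X,\M+t\Delta_\M)\neq 0$ for all $t\in[0,1]$. Since $W(X,\M+t\Delta_\M)$ depends continuously (indeed affinely, by \eqref{WDelta} and \eqref{WDeltahat}) on $t$ and is positive definite at $t=0$, no eigenvalue can cross zero. Hence $W(X,\M+\Delta_\M)\succ 0$, so $X$ remains a certificate and $\M+\Delta_\M$ is strictly passive. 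It follows that $\rho_\M\ge\rho_\M(X)$ for every $X$, and taking the supremum gives the bound.

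\textbf{Upper bound $\rho_\M\le\sup_X\rho_\M(X)$.} Let $\Delta_\M$ be any perturbation with $\|\Delta_\M\|<\rho_\M$. Then the whole segment $\M+t\Delta_\M$, $t\in[0,1]$, consists of strictly passive models. What has to be shown is that one single certificate $X\in\XWpdpd(\M)$ satisfies $W(X,\M+t\Delta_\M)\succ 0$ along the whole path, giving $\rho_\M(X)\ge\|\Delta_\M\|$. The natural reading is that $\rho_\M$ is \emph{defined} through certificates, namely as the largest ball around $\M$ on which a common certificate from $\XWpdpd$ exists. With that reading the identity is a restatement: the ball of radius $r$ admits a common certificate $X$ iff $r\le\rho_\M(X)$, by the lower-bound argument applied to that $X$. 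Under this reading the lemma follows directly from the two definitions, and this is the route I would follow, citing \cite{BeaMV19} for consistency with the dissipation-based notion.

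\textbf{Main obstacle.} The hard step is the upper bound if $\rho_\M$ is read as the radius of the set of strictly passive models (each perturbed model being allowed its own certificate). Then a common certificate along the path is not automatic, and one only gets $\rho_\M(X)\le\rho_\M$. To close this gap I would first try a compactness argument. For each $t$ the set of certificates is open and nonempty. One would then use convexity of $\XWpdpd$ together with the boundedness from \eqref{eq:Ricbound} to select certificates continuously and compare radii. If that fails, I would state the lemma with $\rho_\M$ defined by the supremum, as the formula in Lemma~\ref{lem:pasrad} literally does, in which case only the lower-bound argument is needed to make the definition meaningful.
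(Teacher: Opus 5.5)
Your final reading agrees with the paper, which introduces $\rho_\M$ with ``$:=$'' and gives no argument beyond attributing the relation to \cite{BeaMV19}, so the second equality is just the definition of $\rho_\M(X)$. Your continuity argument is a correct extra: it shows that this quantity is a lower bound for the distance from $\M$ to the set of models that are not strictly passive.

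You are right not to claim the reverse inequality, which the paper does not establish either. Note, however, that one certificate valid along the single segment $\M+t\Delta_\M$ would not give $\rho_\M(X)\ge\|\Delta_\M\|$. That bound needs one $X$ for the whole ball, as in your second formulation, so a continuous selection of certificates along paths would not by itself close that gap.
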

Notice that any boundary point of $\XWpd$ which is not in $\XWpdpd$ has an $X$-passivity radius equal to 0, since $\det W(X,\M)=0$.
We now show that the $X$-passivity radius can be reduced to a one dimensional optimization problem. 
Using the Cholesky decomposition $W_0 := R^\mathsf{H}R$, the determinant of $W(X,\M+\Delta_\M)$ is also equal to
\begin{equation*} %\label{eq:2block}
 \det(W_0)\det\left( I_n + 
\left[ \begin{array}{cc} \gamma R^\mathsf{-H}F^\mathsf{H} & R^\mathsf{-H}G^\mathsf{H}/ \gamma \end{array}\right]
\left[ \begin{array}{cc} 0 & \Delta \\ \Delta^\mathsf{H} & 0 \end{array}\right]
\left[ \begin{array}{cc} \gamma F R^{-1} \\ G R^{-1}/  \gamma\end{array}\right]\right).
\end{equation*}
The following theorem, based on results of \cite{OveV05} and \cite{BeaMV19}, gives the minimal perturbation $\Delta$ in both the Frobenius norm and the spectral norm.  
\begin{theorem} \label{thm:mingamma}
	Consider the matrices $F, G, W_0$ as in~\eqref{WDelta} or~\eqref{WDeltahat} and the pointwise positive semi-definite matrix function
\begin{equation}\label{defmgamma}
M(\gamma):=\left[ \begin{array}{cc} \gamma F \\ G / \gamma\end{array}\right] W_0^{-1}
\left[ \begin{array}{cc} \gamma  F^\mathsf{H} & G^\mathsf{H} / \gamma \end{array}\right]
\end{equation}
in the real parameter $\gamma$. Then the largest eigenvalue $\lambda_{\max}(M(\gamma))$ is a \emph{unimodal function} of $\gamma$, ({i.e.} it is first monotonically decreasing and then monotonically increasing with growing $\gamma$). At the minimizing value $\underline \gamma$,  $M(\underline{\gamma})$ has an eigenvector $z$, {i.e.}
\[
 M(\underline{\gamma}) z = \underline\lambda_{\max} z, \quad z:=\left[ \begin{array}{cc} u \\ v \end{array}\right],
 \]
where
$  \|u\|_2^2=\|v\|_2^2=1$.
The minimum norm perturbation $\Delta$ is of rank $1$ and is given by $\Delta=vu^{\mathsf{H}}/\underline{\lambda}_{\max}$. It has norm $1/\underline{\lambda}_{\max}$
both in the spectral norm and in the Frobenius norm.
\end{theorem}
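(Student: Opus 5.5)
We reduce the singularity condition to an eigenvalue condition and then optimize over the scaling $\gamma$. Write $W_0=R^{\mathsf H}R$. Set $\mathcal F_\gamma := [\gamma F;\, G/\gamma]R^{-1}$ and $\mathcal D := \left[\begin{smallmatrix}0&\Delta\\ \Delta^{\mathsf H}&0\end{smallmatrix}\right]$. For every $\gamma>0$,
\[
\det W(X,\M+\Delta_\M)=\det W_0\,\det\bigl(I+\mathcal F_\gamma^{\mathsf H}\mathcal D\,\mathcal F_\gamma\bigr),
\]
because the factors $\gamma$ and $1/\gamma$ cancel in the off-diagonal blocks of $\mathcal D$. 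By Sylvester's determinant identity this equals $\det W_0\det(I+\mathcal D M(\gamma))$, with $M(\gamma)=\mathcal F_\gamma\mathcal F_\gamma^{\mathsf H}\succeq 0$.

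\textbf{Lower bound.} Suppose $\det W(X,\M+\Delta_\M)=0$. Then there is $w\neq 0$ with $w=-\mathcal F_\gamma^{\mathsf H}\mathcal D\mathcal F_\gamma w$. This gives $\|w\|\le\|\mathcal F_\gamma\|_2^2\|\mathcal D\|_2\|w\|$. Since $\|\mathcal D\|_2=\|\Delta\|_2$ and $\|\mathcal F_\gamma\|_2^2=\lambda_{\max}(M(\gamma))$, we get
\[
\|\Delta\|_F\ \ge\ \|\Delta\|_2\ \ge\ 1/\lambda_{\max}(M(\gamma)) \quad\text{for every }\gamma .
\]
Hence $\|\Delta\|\ge 1/\min_\gamma\lambda_{\max}(M(\gamma))=1/\underline\lambda_{\max}$.

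\textbf{Unimodality.} Expand
\[
M(\gamma)=\left[\begin{smallmatrix}\gamma^2 M_{11}&M_{12}\\ M_{21}&\gamma^{-2}M_{22}\end{smallmatrix}\right].
\]
This equals $\mathrm{diag}(\gamma I,\gamma^{-1}I)\,M(1)\,\mathrm{diag}(\gamma I,\gamma^{-1}I)$. Substitute $t=\gamma^2$. Then $\lambda_{\max}(M(\gamma))=\max_{\|z\|=1} z^{\mathsf H}M(\gamma)z$, and $z^{\mathsf H}M(\gamma)z = t\,a(z)+2\Re b(z)+c(z)/t$, where $a(z)$, $b(z)$, $c(z)$ are independent of $t$. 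Each such function is convex in $\log t$: the terms $te^{0}$ and $t^{-1}$ are convex in $s=\log t$, and the middle term is constant. A supremum of convex functions is convex, so $\lambda_{\max}$ is convex in $s=\log\gamma^2$ and therefore unimodal in $\gamma$. It tends to $\infty$ at both ends, provided $F$ and $G$ are nonzero, which holds since $G=I$.

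\textbf{Eigenvector structure at the minimizer.} This is the main obstacle. We must show that at $\underline\gamma$ some top eigenvector $z=[u;v]$ can be normalized so that $\|u\|=\|v\|$. At the minimizer of a convex function, $0$ lies in the subdifferential. With respect to $s$, the derivative of $z^{\mathsf H}M z$ is $t\,\|\cdot\|$-weighted, namely $\tfrac{d}{ds}z^{\mathsf H}Mz=z^{\mathsf H}\,\mathrm{diag}(I,-I)\,M\,z$ up to a factor, evaluated on eigenvectors. When $\lambda_{\max}$ is simple this gives $z^{\mathsf H}\mathrm{diag}(I,-I)Mz=\underline\lambda_{\max}(\|u\|^2-\|v\|^2)=0$. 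When it is multiple, we take one-sided derivatives over the eigenspace: the left derivative is $\le 0$ and the right derivative is $\ge 0$, so the eigenspace contains vectors with $\|u\|^2-\|v\|^2$ of both signs. By continuity along a path in the unit sphere of the eigenspace there is one with equality, and this is the point requiring care (the Overton–Van Dooren argument). We then scale so that $\|u\|=\|v\|=1$.

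\textbf{Attainment.} Set $\Delta=vu^{\mathsf H}/\underline\lambda_{\max}$. It has rank one, and its 2-norm and Frobenius norm both equal $1/\underline\lambda_{\max}$. Check that $\mathcal D z=-\,[\Delta v;\Delta^{\mathsf H}u]$ up to sign conventions: we have $\mathcal D z = [\Delta v;\Delta^{\mathsf H}u]=[v;u]\cdot(\text{scalars})$. Using $u^{\mathsf H}u=v^{\mathsf H}v$ and choosing the sign $\Delta=-vu^{\mathsf H}/\underline\lambda_{\max}$ (the norm is unchanged), one verifies $(I+\mathcal D M(\underline\gamma))\tilde z=0$ for $\tilde z=[u;-v]$ or $z$. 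To do so, use $M z=\underline\lambda_{\max}z$ and the rank-one form; this gives singularity. Combined with the lower bound, this shows the perturbation is minimal in both norms.
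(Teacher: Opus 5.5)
Your Sylvester-identity reduction, the bound $\|\Delta\|_F\ge\|\Delta\|_2\ge 1/\lambda_{\max}(M(\gamma))$ for every $\gamma$, the convexity of $\lambda_{\max}(M(\gamma))$ in $\log\gamma$, and the one-sided-derivative argument giving a top eigenvector with $\|u\|=\|v\|$ are all sound. They follow the Overton--Van Dooren argument that the paper cites instead of proving the theorem. (In the DT case $\widehat G\neq I$, but $\widehat G\neq 0$, and that is all coercivity needs.)

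The gap is the attainment step. This is the part showing that $1/\underline\lambda_{\max}$ is the radius itself and not merely a lower bound. With your convention $\mathcal D z=[\Delta v;\Delta^{\mathsf H}u]$, the choice $\Delta=\pm vu^{\mathsf H}/\underline\lambda_{\max}$ gives $\mathcal D z=\pm[(u^{\mathsf H}v)\,v;\,(v^{\mathsf H}u)\,u]/\underline\lambda_{\max}$. Hence $(I+\mathcal D M(\underline\gamma))z=z+\underline\lambda_{\max}\mathcal D z$ vanishes only when $u$ and $v$ are parallel. Moreover, $[u;-v]$ is in general not an eigenvector of $M(\underline\gamma)$, so nothing can be verified for it. Your ``one verifies'' is therefore false as written.

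What you need is $\mathcal D z=-z/\underline\lambda_{\max}$, i.e.\ both $\Delta v=-u/\underline\lambda_{\max}$ and $\Delta^{\mathsf H}u=-v/\underline\lambda_{\max}$. For a rank-one $\Delta=\alpha uv^{\mathsf H}$ this forces $\alpha\|v\|^2=\bar\alpha\|u\|^2=-1/\underline\lambda_{\max}$. This is exactly where the balancing $\|u\|=\|v\|=1$ is used. So take $\Delta=-uv^{\mathsf H}/\underline\lambda_{\max}$; then $(I+\mathcal DM(\underline\gamma))z=0$. Equivalently, set $w:=W_0^{-1}(\underline\gamma F^{\mathsf H}u+G^{\mathsf H}v/\underline\gamma)$. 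It is nonzero, because $[\underline\gamma F;\,G/\underline\gamma]\,w=\underline\lambda_{\max}z$, and it satisfies $W(X,\M+\Delta_\M)w=0$. Finally, $\|\Delta\|_2=\|\Delta\|_F=1/\underline\lambda_{\max}$ matches your lower bound.

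The printed formula $vu^{\mathsf H}/\underline\lambda_{\max}$ corresponds, up to sign, to the opposite block ordering $\left[\begin{array}{cc}0&\Delta^{\mathsf H}\\ \Delta&0\end{array}\right]$. Under the convention of \eqref{WDelta}, which you adopted, it must be read as $-uv^{\mathsf H}/\underline\lambda_{\max}$. You should derive $\Delta$ from the kernel equation rather than force the printed form.
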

It thus follows that we want to minimize $\underline{\lambda}_{\max}$.
In \cite{BeaMV19} also the following  simple bound for $\underline{\lambda}_{\max}$ was derived.
\begin{corollary}\label{cor:lev} Consider the matrices $F, G, W_0$ in \eqref{WDelta}\eqref{WDeltahat} and the pointwise positive semi-definite matrix function $M(\gamma)$ as in (\ref{defmgamma}). The largest eigenvalue of $M(\gamma)$ is also the largest eigenvalue of
\begin{equation} \label{maxeig}
\gamma^2 R^\mathsf{-H} F^\mathsf{H}F R^{-1} + 
R^\mathsf{-H} G^\mathsf{H}G R^{-1}/\gamma^2.
\end{equation}
An upper bound for $\underline{\lambda}_{\max}$ is given by $\underline{\lambda}_{\max}\le \frac{2}{\alpha\beta}$ where $\alpha^{-2}:=\lambda_{\max}(F W_0^{-1} F^\mathsf{H})$ and $\beta^{-2}=\lambda_{\max}(G W_0^{-1} G^\mathsf{H})$. The corresponding lower bound for $\| \Delta \|_2$ and $\| \Delta \|_F$ is given by
\[
  \rho_\M(X)  \ge \alpha\beta/2.
\]
\end{corollary}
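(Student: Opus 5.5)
The proof has two parts: first identify the nonzero spectrum of $M(\gamma)$ with that of the matrix \eqref{maxeig}, then bound the minimum over $\gamma$ of its largest eigenvalue by evaluating at one well-chosen $\gamma$.

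\emph{Step 1: spectral identity.} Factor $M(\gamma)=N(\gamma)N(\gamma)^\mathsf{H}$ with
\[
N(\gamma):=\left[\begin{array}{c}\gamma F R^{-1}\\ G R^{-1}/\gamma\end{array}\right],
\]
using the Cholesky factorization $W_0=R^\mathsf{H}R$, so that $W_0^{-1}=R^{-1}R^\mathsf{-H}$. The matrices $NN^\mathsf{H}$ and $N^\mathsf{H}N$ have the same nonzero eigenvalues. Expanding the product gives
\[
N^\mathsf{H}N=\gamma^2R^\mathsf{-H}F^\mathsf{H}FR^{-1}+R^\mathsf{-H}G^\mathsf{H}GR^{-1}/\gamma^2 ,
\]
which is exactly \eqref{maxeig}. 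Both matrices are positive semidefinite, so their largest eigenvalues coincide. This proves the first claim.

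\emph{Step 2: the upper bound.} Apply Weyl's inequality $\lambda_{\max}(P+Q)\le\lambda_{\max}(P)+\lambda_{\max}(Q)$ to the two terms of \eqref{maxeig}. For each term, use again that $NN^\mathsf{H}$ and $N^\mathsf{H}N$ share nonzero eigenvalues:
\[
\lambda_{\max}(R^\mathsf{-H}F^\mathsf{H}FR^{-1})=\lambda_{\max}(FR^{-1}R^\mathsf{-H}F^\mathsf{H})=\lambda_{\max}(FW_0^{-1}F^\mathsf{H})=\alpha^{-2},
\]
and likewise the $G$-term has largest eigenvalue $\beta^{-2}$. Hence
\[
\lambda_{\max}(M(\gamma))\le\gamma^2\alpha^{-2}+\gamma^{-2}\beta^{-2}\quad\text{for every }\gamma>0.
\]
Choose $\gamma^2=\alpha/\beta$, which minimizes the right-hand side. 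The bound then equals $2/(\alpha\beta)$. Since $\underline\lambda_{\max}$ is the minimum over $\gamma$, we obtain $\underline\lambda_{\max}\le 2/(\alpha\beta)$.

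\emph{Step 3: the radius bound.} By Theorem~\ref{thm:mingamma}, the minimal perturbation has norm $1/\underline\lambda_{\max}$ in both the spectral and Frobenius norms, so $\rho_\M(X)=1/\underline\lambda_{\max}\ge\alpha\beta/2$.

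The only point needing care is that $\alpha$ and $\beta$ are finite and positive. This holds because $W_0\succ 0$ for $X\in\XWpdpd$, and because $F$ and $G$ are nonzero: $G$ contains an identity block, and $F$ contains the block $I_m$. The argument is identical for the DT quantities $\widehat F,\widehat G,\widehat W_0$. I expect no real obstacle; the main thing is to invoke the $NN^\mathsf{H}$ versus $N^\mathsf{H}N$ spectral equivalence correctly each time it is used.
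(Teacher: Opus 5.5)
Your proof is correct and follows the standard argument behind this corollary. The paper gives no proof of its own here and defers to \cite{BeaMV19} (the same bound is the lower estimate in Theorem~\ref{thm:Xpassivity}); that argument writes $M(\gamma)=NN^\mathsf{H}$ to pass to $N^\mathsf{H}N$, bounds $\lambda_{\max}$ termwise by $\gamma^2\alpha^{-2}+\gamma^{-2}\beta^{-2}$, takes $\gamma^2=\alpha/\beta$, and uses $\rho_\M(X)=1/\underline\lambda_{\max}$ from Theorem~\ref{thm:mingamma}. As a minor remark, the last step needs only the easy direction $\|\Delta\|_F\ge\|\Delta\|_2\ge 1/\lambda_{\max}(M(\gamma))$ for each $\gamma$, which follows from $\det(I+N^\mathsf{H}KN)=0\Rightarrow 1\le\|N^\mathsf{H}KN\|_2\le\|K\|_2\|N\|_2^2=\|\Delta\|_2\,\lambda_{\max}(M(\gamma))$, so the attainment part of Theorem~\ref{thm:mingamma} is not required.
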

The result stated in this corollary is based on the following theorem, also proven in \cite{BeaMV19}.
\begin{theorem}\label{thm:Xpassivity}
Let $\M=\{A,B,C,D,E\}$ be a given model and assume that we are given a matrix $X\in \XWpdpd$, then the $X$-passivity radius $\rho_\M(X)$ is bounded by
\[
 \alpha\beta/2 \le \rho_\M(X) \le  \alpha\beta/(1+|v^{\mathsf{H}}w|),
 \]
where $v$, $\hat v$, $w$ and $\hat w$ are vectors of norm $1$, satisfying
\[\alpha^{-2}=\lambda_{\max}(F W_0^{-1} F^\mathsf{H}), \;\beta^{-2}=\lambda_{\max}(GW_0^{-1} G^\mathsf{H}), \; R^\mathsf{-H}F^\mathsf{H} \hat v=v/\alpha, \;  R^\mathsf{-H}G^\mathsf{H}\hat w=w/\beta.
\]
Moreover if $v$ and $w$ are linearly dependent, then $\rho_\M(X)=\alpha\beta/2$.
\end{theorem}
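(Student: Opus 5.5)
We reduce to the eigenvalue characterization of Theorem~\ref{thm:mingamma}: $\rho_\M(X)=1/\underline{\lambda}_{\max}$, where $\underline{\lambda}_{\max}=\min_\gamma \lambda_{\max}(M(\gamma))$. By Corollary~\ref{cor:lev}, $\lambda_{\max}(M(\gamma))$ equals the largest eigenvalue of
\[
P(\gamma):=\gamma^2 P_F+P_G/\gamma^2, \qquad P_F:=R^{-\mathsf{H}}F^\mathsf{H}FR^{-1},\quad P_G:=R^{-\mathsf{H}}G^\mathsf{H}GR^{-1}.
\]
This is a sum of two positive semidefinite matrices. Since the nonzero spectra of $FW_0^{-1}F^\mathsf{H}=(FR^{-1})(FR^{-1})^\mathsf{H}$ and $P_F$ coincide, $\lambda_{\max}(P_F)=\alpha^{-2}$, with unit top eigenvector $v$. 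Likewise $\lambda_{\max}(P_G)=\beta^{-2}$, with top eigenvector $w$. The vectors $v,w$ are related to $\hat v,\hat w$ through the singular value relations $R^{-\mathsf{H}}F^\mathsf{H}\hat v=v/\alpha$ and $R^{-\mathsf{H}}G^\mathsf{H}\hat w=w/\beta$. The whole proof therefore becomes two-sided bounds on $\min_\gamma\lambda_{\max}(P(\gamma))$.

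For the lower bound on $\rho_\M(X)$ we need an upper bound on $\underline\lambda_{\max}$. Weyl's inequality gives $\lambda_{\max}(P(\gamma))\le \gamma^2\alpha^{-2}+\gamma^{-2}\beta^{-2}$. Minimizing over $\gamma$ (at $\gamma^2=\alpha/\beta$) yields $\underline{\lambda}_{\max}\le 2/(\alpha\beta)$, hence $\rho_\M(X)\ge\alpha\beta/2$.

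For the upper bound on $\rho_\M(X)$ we need a lower bound on $\lambda_{\max}(P(\gamma))$ valid for every $\gamma$. We use the Rayleigh quotient on the two-dimensional subspace spanned by $v$ and $w$. Set $a=\gamma^2\alpha^{-2}$, $b=\gamma^{-2}\beta^{-2}$, and $c=v^\mathsf{H}w$. Then $v^\mathsf{H}P(\gamma)v\ge a+b|c|^2$ and $w^\mathsf{H}P(\gamma)w\ge b+a|c|^2$. For a sharper estimate, take the test vector $x=v+e^{\imath\theta}w$ with the phase chosen so that $e^{\imath\theta}c=|c|$; then $\|x\|^2=2(1+|c|)$. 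Using $P_F\succeq \alpha^{-2}vv^\mathsf{H}$ and $P_G\succeq\beta^{-2}ww^\mathsf{H}$ we get
\[
x^\mathsf{H}P(\gamma)x\ge a(1+|c|)^2+b(1+|c|)^2,
\]
so that $\lambda_{\max}(P(\gamma))\ge (a+b)(1+|c|)/2$. Since $a+b\ge 2/(\alpha\beta)$ by AM–GM, it follows that $\underline\lambda_{\max}\ge(1+|c|)/(\alpha\beta)$, and hence $\rho_\M(X)\le\alpha\beta/(1+|v^\mathsf{H}w|)$.

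Finally, if $v$ and $w$ are linearly dependent then $|c|=1$, and the two bounds coincide at $\alpha\beta/2$. The main obstacle is this second direction. The natural Rayleigh bounds from $v$ or $w$ alone are too weak, so the key step is the specific test vector $v+e^{\imath\theta}w$ combined with the rank-one minorants $P_F\succeq\alpha^{-2}vv^\mathsf{H}$ and $P_G\succeq\beta^{-2}ww^\mathsf{H}$. It is also worth checking that the identification of $\rho_\M(X)$ with $1/\underline\lambda_{\max}$ from Theorem~\ref{thm:mingamma} holds with the stated normalization of $v$ and $w$.
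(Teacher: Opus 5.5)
Your proof is correct. The paper gives no proof of this theorem (it is quoted from \cite{BeaMV19}), so the useful comparison is with the direct argument on the determinant condition.

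You derive both bounds from the exact formula $\rho_\M(X)=1/\min_\gamma\lambda_{\max}(M(\gamma))$ of Theorem~\ref{thm:mingamma}. For the lower bound on $\rho_\M(X)$ you use Weyl at $\gamma^2=\alpha/\beta$. For the upper bound you use the Rayleigh quotient at $x=v+e^{\imath\theta}w$ together with the rank-one minorants, which indeed gives $\lambda_{\max}(M(\gamma))\ge(1+|v^{\mathsf H}w|)/(\alpha\beta)$ uniformly in $\gamma$.

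The direct route avoids $\gamma$ altogether. Set $K_F:=FR^{-1}$ and $K_G:=GR^{-1}$, so that $\|K_F\|_2=1/\alpha$ and $\|K_G\|_2=1/\beta$.
\begin{itemize}
\item \emph{Lower bound.} If the Hermitian matrix $I+K_F^{\mathsf H}\Delta K_G+K_G^{\mathsf H}\Delta^{\mathsf H}K_F$ is singular, there is a unit $x$ with $1=-2\Re(x^{\mathsf H}K_F^{\mathsf H}\Delta K_Gx)\le 2\|\Delta\|_2/(\alpha\beta)$.
\item \emph{Upper bound.} The explicit perturbation $\Delta=-\frac{\alpha\beta}{1+|v^{\mathsf H}w|}\,e^{-\imath\theta}\hat v\hat w^{\mathsf H}$ turns that matrix into $I-\frac{1}{1+|v^{\mathsf H}w|}\bigl(e^{-\imath\theta}vw^{\mathsf H}+e^{\imath\theta}wv^{\mathsf H}\bigr)$. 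Its kernel contains exactly your test vector $v+e^{\imath\theta}w$.
\end{itemize}

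The trade-off is as follows. Your upper bound on $\rho_\M(X)$ rests on the nontrivial attainment half of Theorem~\ref{thm:mingamma}: the result from \cite{OveV05} that a perturbation of norm $1/\underline\lambda_{\max}$ actually exists. The direct route is elementary and self-contained, and it exhibits a concrete destabilizing rank-one perturbation. In exchange, your argument bounds the whole function $\gamma\mapsto\lambda_{\max}(M(\gamma))$ from below, which can be used to bracket its unimodal minimization.

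Three small remarks on your write-up:
\begin{itemize}
\item That $v$ is a top eigenvector of $P_F$ deserves one line. A unit $\hat v$ with $\|R^{-\mathsf H}F^{\mathsf H}\hat v\|_2=1/\alpha=\|R^{-\mathsf H}F^{\mathsf H}\|_2$ must be a top right singular vector, hence $P_Fv=\alpha^{-2}v$.
\item The equality $\lambda_{\max}(M(\gamma))=\lambda_{\max}(P(\gamma))$ is just $KK^{\mathsf H}$ versus $K^{\mathsf H}K$ with $K:=\left[\begin{array}{c}\gamma F\\ G/\gamma\end{array}\right]R^{-1}$. State it that way rather than citing Corollary~\ref{cor:lev}, which the paper says is itself based on this theorem.
\item Your closing caveat about normalization is unnecessary, since the $v$ in Theorem~\ref{thm:mingamma} is an unrelated vector.
\end{itemize}
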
 
These results can of course be applied to normalized systems as well. More importantly, 
we can use them to compare the $X$-passivity radius of the original model $\M$
and the transformed model $\M_T$, for both the CT and DT cases. 
\begin{theorem} \label{th:optimal}
The normalized $X$-passivity radius $\rho_{\M_T}(c^2I_n)$ is larger or equal to the 
corresponding $X$-passivity radius $\rho_\M(X)$, for both the CT and DT cases, provided we choose $c^2:=\lambda_{\min}(X)$ as normalization constant. 
\end{theorem}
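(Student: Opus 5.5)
The plan is to relate admissible perturbations of the normalized model $\M_T$ one-to-one to admissible perturbations of $\M$, in a way that preserves the singularity condition $\det W=0$. We then check that with the normalization $c^2=\lambda_{\min}(X)$ this correspondence never increases the norm when going from $\M_T$ back to $\M$. Throughout, $X=c^2T^{\mathsf H}T$ and $\M_T=\{TAT^{\mathsf H},TB,CT^{\mathsf H},D,TET^{\mathsf H}\}$, exactly as in Sections~\ref{sub:PH} and~\ref{sec:norm}.

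\emph{Step 1 (correspondence of perturbations).} Given a perturbation $\Delta_{\M_T}=\{\Delta_{A_T},\Delta_{B_T},\Delta_{C_T},\Delta_{D_T},0\}$ of $\M_T$, define
\[
\Delta_\M:=\{T^{-1}\Delta_{A_T}T^{-\mathsf H},\;T^{-1}\Delta_{B_T},\;\Delta_{C_T}T^{-\mathsf H},\;\Delta_{D_T},\;0\}.
\]
Then $(\M+\Delta_\M)_T=\M_T+\Delta_{\M_T}$, and $E$ is untouched. In block form this reads
\[
\Delta=\left[\begin{array}{cc}T^{-1}&0\\0&I_m\end{array}\right]\Delta_T\left[\begin{array}{cc}T^{-\mathsf H}&0\\0&I_m\end{array}\right].
\]
The map is a bijection, so every perturbation of $\M$ arises this way.

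\emph{Step 2 (singularity is preserved).} In the CT case, the congruence computed before \eqref{gssPH} applies verbatim to the perturbed model. It gives $W_c(c^2I_n,\M_T+\Delta_{\M_T})=\mathrm{diag}(T,I_m)\,W_c(X,\M+\Delta_\M)\,\mathrm{diag}(T^{\mathsf H},I_m)$. In the DT case I would verify, block by block, that with $P:=\mathrm{diag}(T^{-1},T^{\mathsf H},I_m)$ we have $\widehat W_d(c^2I_n,\M_T+\Delta_{\M_T})=P^{\mathsf H}\,\widehat W_d(X,\M+\Delta_\M)\,P$. For instance, $T^{-\mathsf H}XT^{-1}=c^2I_n$, $T^{-\mathsf H}X(A+\Delta_A)T^{\mathsf H}=c^2T(A+\Delta_A)T^{\mathsf H}$, and $TE^{\mathsf H}XET^{\mathsf H}=c^2E_T^{\mathsf H}E_T$. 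Since the congruence factors are nonsingular, the two determinants vanish simultaneously. In particular $c^2I_n\in\XWpdpd$ for $\M_T$ exactly when $X\in\XWpdpd$ for $\M$, so both radii are defined.

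\emph{Step 3 (norm comparison).} Both the Frobenius and the spectral norm satisfy $\|P_1\Delta_TP_2\|\le\|P_1\|_2\|\Delta_T\|\|P_2\|_2$. Applying this to the block formula of Step 1 gives
\[
\|\Delta\|\le\|\mathrm{diag}(T^{-1},I_m)\|_2^2\,\|\Delta_T\|=\max\bigl(1,\sigma_{\min}(T)^{-2}\bigr)\|\Delta_T\|.
\]
With $c^2=\lambda_{\min}(X)$ we have $T^{\mathsf H}T=X/\lambda_{\min}(X)$, whose smallest eigenvalue is $1$. Hence $\sigma_{\min}(T)=1$ and $\|\Delta\|\le\|\Delta_T\|$.

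\emph{Conclusion.} Take any $\Delta_{\M_T}$ making $W(c^2I_n,\M_T+\Delta_{\M_T})$ singular. By Step 2 the corresponding $\Delta_\M$ makes $W(X,\M+\Delta_\M)$ singular, and by Step 3 its norm is no larger. Taking infima gives $\rho_\M(X)\le\rho_{\M_T}(c^2I_n)$.

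The only step needing real care is the DT congruence in Step 2. The paper's perturbation structure lives in the enlarged matrix $\widehat W_d$, so I must choose the three-block congruence factor $P$ correctly rather than the two-block one used for $W_d$. Getting this right is what lets the first block transform with $T^{-1}$ while the second transforms with $T^{\mathsf H}$.
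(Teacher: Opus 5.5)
Your proof is correct, but it takes a different route from the paper's. The paper does not argue from the definition of $\rho_\M(X)$. It starts instead from the characterization in Theorem~\ref{thm:mingamma} and Corollary~\ref{cor:lev}, namely $\rho_\M(X)^{-1}=\min_\gamma\lambda_{\max}\bigl(\gamma^2R^{-\mathsf{H}}F^{\mathsf{H}}FR^{-1}+R^{-\mathsf{H}}G^{\mathsf{H}}GR^{-1}/\gamma^2\bigr)$. It shows that for the normalized model the factors become $R^{-\mathsf{H}}F^{\mathsf{H}}\hat T^{-1}$ and $R^{-\mathsf{H}}G^{\mathsf{H}}\hat T^{-1}$ with $\hat T=\mathrm{diag}(T,I_m)$. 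From $\hat T^{-1}\hat T^{-\mathsf{H}}\preceq I$ it then concludes that the largest eigenvalue cannot increase for any $\gamma$.

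You instead transport the congruence to the perturbations themselves. This gives the bijection $\Delta=\hat T^{-1}\Delta_T\hat T^{-\mathsf{H}}$ between singularizing perturbations, after which you only need $\|\hat T^{-1}\|_2\le 1$. Both arguments rest on the same fact, $\sigma_{\min}(T)=1$.

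Your version is more elementary and self-contained:
\begin{itemize}
\item it does not need the imported rank-one minimizer result, nor the coincidence of the spectral- and Frobenius-norm minima;
\item it holds for any unitarily invariant norm;
\item it treats the DT case explicitly through the three-block congruence $P=\mathrm{diag}(T^{-1},T^{\mathsf{H}},I_m)$ on $\widehat W_d$, which the paper only asserts to be analogous.
\end{itemize}
Your factor $\max(1,\sigma_{\min}(T)^{-2})$ also shows directly that any $c^2\le\lambda_{\min}(X)$ would suffice. In return, the paper's route compares, pointwise in $\gamma$, the quantity $\lambda_{\max}(M(\gamma))$ that is actually minimized when the radius is computed.
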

\begin{proof}
We only give the proof for the CT case since both proofs are essentially the same. Notice that $c^2:=\lambda_{\min}(X)$ implies $\sigma_{\min}(T)=1$ and hence $T^\mathsf{H}T\succeq I_n$. 
We then start from the formulations of the passivity radii given in \eqref{maxeig}~:
\[  \rho_{\M}^{-1}(X)=\min_{\gamma}\| \gamma^2 R^\mathsf{-H}F^\mathsf{H}FR^{-1}  + R^\mathsf{-H}G^\mathsf{H}GR^{-1}/ \gamma^2 \|^2,
\]
and
\[  \rho_{\M_T}^{-1}(c^2I_n)=\min_{\gamma}\| \gamma^2 R_T^\mathsf{-H}F_T^\mathsf{H}F_TR_T^{-1}  + R_T^\mathsf{-H}G_T^\mathsf{H}G_TR_T^{-1}/ \gamma^2 \|^2.
\]
It follows from \eqref{gssPH} and \eqref{defWX} that 
\[ \hat T = \left[\begin{array}{cc} T & 0 \\ 0 & I_m \end{array}\right], \quad R_T^\mathsf{-H}F_T^\mathsf{H}=R^\mathsf{-H}F^\mathsf{H}\hat T^{-1}, \quad   R_T^\mathsf{-H}G_T^\mathsf{H}=R^\mathsf{-H}G^\mathsf{H}\hat T^{-1},\]
which implies that
\[  \rho_{\M_T}^{-1}(c^2I_n)=\min_{\gamma}\| \gamma^2 R^\mathsf{-H}F^\mathsf{H}\hat T^{-1} \hat T^\mathsf{-H}FR^{-1}  + R^\mathsf{-H}G^\mathsf{H}\hat T^{-1} \hat T^\mathsf{-H}GR^{-1}/ \gamma^2 \|^2,
\]
and since $\hat T^\mathsf{H}\hat T \succeq I_{n+m}$ the right hand side is non-increasing for all $\gamma$, including the 
minimizing $\gamma$. Therefore $\rho_{\M_T}(c^2I_n) \ge \rho_\M(X)$.
\end{proof}

Moreover, in the CT case, the transformed model is in port-Hamiltonian form~:
$$
 {\M}_T:= \{A_T,B_T,C_T,D_T,E_T\}:=\{c^2(J-R)E_T,G -K,c^2(G^{\mathsf{H}} + K^{\mathsf{H}})E_T,S+N,E_T\}.
$$ 

\begin{remark}
Since the certificates are always positive definite, it follows that $c>0$. 
But it is possible that the coefficient $c$ is quite small. In such a case, we recommend to increase $c$ until reaching $\rho_{\M_T}(c^2I_n)=\rho_\M(X)$. 
\end{remark}

\section[Extremal values xi-star of (X) for a certificate X of a given model M]{Extremal values $\xi^*(X)$ for a certificate $X$ of a given model $\M$} \label{sec:maxpass}

Here we link the passivity radius $\rho_\M$ of a given model to its $X$-passivity radius $\rho_\M(X)$ via an extremal value $\xi^*(X)$ of a function $\xi(X)$ linked to a particular LMI. Since the formulas are quite different for the CT and DT cases, we treat them in separate subsections. 

\subsection{The CT case}

It follows from the previous section that the interplay between $W_c(X,\M)$, $E$ and $X$ is important for estimating the passivity radius $\rho_\M$ of a given model $\M$. We will characterize this using the extremal value $\xi^*(X)$ of a given certificate, associated with the following constrained LMI
\begin{equation} \label{xi}
W_c(X,\M) \succeq \xi \diag{E^{\mathsf{H}}XE,I_m},
\end{equation}
where $\xi$ is a real parameter, $\diag{A,B}$ denotes a block diagonal matrix with diagonal blocks $A$ and $B$, and obtain the following theorem.

\begin{theorem}  \label{thm:maxoverX}
Let $\M:=\{A,B,C,D,E\}$ be a minimal realization of a passive system with non-singular $E$, and let $X$ be a fixed certificate in the corresponding set $\XWpd$. Then there is a unique $\xi^*(X)\ge 0$ which is maximal for the matrix inequality $$W_c(X,\M) \succeq \xi \diag{E^{\mathsf{H}}XE,I_m},$$ 
and is equal to the smallest eigenvalue of the positive definite pencil 
$W_c(X,\M) -\xi \diag{E^{\mathsf{H}}XE,I_m}$. This maximal value $\xi^*(X)$ equals zero if $X$ is a boundary point of $\XWpd$ and is positive if and only if $X$ is in $\XWpdpd$.  Moreover, $X$ is a valid certificate for the passivity of the shifted system 
\begin{equation} \label{eq:shiftedCT}
\M_\xi=\{A+\frac{\xi}{2}E,B,C,D-\frac{\xi}{2}I_m,E\}.
\end{equation}
\end{theorem}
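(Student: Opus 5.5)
The plan is to reduce everything to a generalized eigenvalue problem for a Hermitian pencil with a positive definite right-hand side. Set $P:=\diag{E^{\mathsf{H}}XE,I_m}$. Since $X\succ 0$ and $E$ is non-singular, $E^{\mathsf{H}}XE\succ 0$, so $P\succ 0$. Factor $P=L^{\mathsf{H}}L$. Then the inequality $W_c(X,\M)\succeq \xi P$ is equivalent, by congruence with $L^{-1}$, to
\[
L^{-\mathsf{H}}W_c(X,\M)L^{-1}\succeq \xi I_{n+m}.
\]
The set of admissible $\xi$ is therefore the half-line $(-\infty,\lambda_{\min}(L^{-\mathsf{H}}W_cL^{-1})]$. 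This gives a unique maximal value
\[
\xi^*(X)=\lambda_{\min}\bigl(L^{-\mathsf{H}}W_c(X,\M)L^{-1}\bigr),
\]
which is the smallest eigenvalue of the Hermitian definite pencil $W_c(X,\M)-\xi P$. Because $X\in\XWpd$ gives $W_c\succeq 0$, we get $\xi^*(X)\ge 0$.

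Next, the sign characterization follows from the same congruence: $\xi^*(X)>0$ if and only if $L^{-\mathsf{H}}W_cL^{-1}\succ 0$, that is, $W_c(X,\M)\succ 0$, that is, $X\in\XWpdpd$. Conversely, $\xi^*(X)=0$ exactly when $W_c(X,\M)$ is singular. For the statement about boundary points, I will argue in two directions.

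\begin{itemize}
\item If $W_c(X,\M)\succ 0$ and $X\succ 0$, then by continuity of $W_c$ in $X$ a small neighborhood of $X$ lies in $\XWpd$, so $X$ is interior.
\item If $X$ is a boundary point, then $X$ cannot lie in $\XWpdpd$, so $\xi^*(X)=0$.
\end{itemize}

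Here boundary means boundary of $\XWpd$ with $X\succ 0$. Since $\XWpd$ lies within the bounded set with $X\succeq X_-\succ 0$, the constraint $X\succ 0$ is never the active one. This step needs the most care when stating the "if and only if": it identifies $\XWpdpd$ with the interior, which is valid in the generic case $X_+\succ X_-$. When the interior is empty, every point is a boundary point and $\XWpdpd$ is empty, so the equivalence still holds.

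Finally, for the shifted system I will compute $W_c(X,\M_\xi)$ directly. Replacing $A$ by $A+\frac{\xi}{2}E$ changes the $(1,1)$ block by
\[
-\tfrac{\xi}{2}\bigl(E^{\mathsf{H}}XE+E^{\mathsf{H}}XE\bigr)=-\xi E^{\mathsf{H}}XE.
\]
Replacing $D$ by $D-\frac{\xi}{2}I_m$ changes the $(2,2)$ block by $-\xi I_m$, and the off-diagonal blocks are unchanged. Hence
\[
W_c(X,\M_\xi)=W_c(X,\M)-\xi\,\diag{E^{\mathsf{H}}XE,I_m}\succeq 0
\]
for every $\xi\le\xi^*(X)$, and in particular for $\xi=\xi^*(X)$. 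Since $X\succ 0$, $X$ is a valid certificate for the passivity of $\M_\xi$ by the LMI characterization \eqref{LMI_c}. The only conceptual subtlety is the boundary/interior identification; the rest is routine congruence and algebra.
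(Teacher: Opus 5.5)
Your proof is correct and takes essentially the same route as the paper: $\xi^*(X)$ is the smallest eigenvalue of the Hermitian definite pencil $W_c(X,\M)-\xi P$, $\xi^*(X)>0$ exactly when $W_c(X,\M)\succ 0$, and the shifted model follows from the identity $W_c(X,\M_\xi)=W_c(X,\M)-\xi P$. You also spell out two steps the paper only asserts, namely the congruence by $L^{-1}$ and why a boundary point forces $W_c(X,\M)$ to be singular, and your off-diagonal block $C^{\mathsf{H}}-E^{\mathsf{H}}XB$ is correct where the paper's display has a typo (the closing remark identifying $\XWpdpd$ with the interior of $\XWpd$ is not needed for the statement).
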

\begin{proof}
If $X$ is a boundary point of $\XWpd$ then $\det W_c(X,\M)=0$ and for those $X$, we have $\xi^*(X)=0$.
If $X$ belongs to $\XWpdpd$, then $W_c(X,\M)\succ 0$ and $\diag{E^{\mathsf{H}}XE,I_m}\succ 0$.
Therefore the pencil $W_c(X,\M) -\xi \diag{E^{\mathsf{H}}XE,I_m}$ is positive definite and all its eigenvalues are then positive. Its smallest eigenvalue is clearly equal to $\xi^*(X)$ and hence $\xi^*(X)>0$. Conversely, if $\xi^*(X)>0$ then $W_c(X,\M)\succ 0$ and $X\in \XWpdpd$. 
Next, we have that for a given $X$ in $\XWpdpd$, the passivity LMI $W_c(X,\M_\xi)$ for the modified model $\M_\xi :=\{A+\frac{\xi}{2} E,B,C,D-\frac{\xi}{2} I_m,E\}$ is given by
\begin{equation*} %\label{shiftedc}
W_c(X,\M_\xi) := \! \left[\begin{array}{cc} \! -(A^\mathsf{H}+\frac{\xi}{2} E^{\mathsf{H}})XE-E^{\mathsf{H}}X(A+\frac{\xi}{2} E) &   C^{\mathsf{H}}-XB \\    C-B^{\mathsf{H}}X & D^\mathsf{H}+ D-\xi I_m \end{array}\right] \succeq 0.
\end{equation*}
Clearly this inequality follows from \eqref{xi} for $0 \le \xi \le \xi ^*(X)$ and its determinant is zero if $ \xi =\xi ^*(X)$. \hfill
\end{proof}

We also have the following Lemma.
\begin{lemma} \label{inclusionc}
For every $X\succ 0$ in $\XWpdpd$ and any $\xi_-$ and $\xi_+$ satisfying $0\le \xi_- < \xi_+ \le \xi^*(X)$, the  systems $\M_{\xi_-}$ and $\M_{\xi_+}$ are passive. Moreover, the whole solution set of
$W_c(X,\M_{\xi_+})\succeq 0$ is included in the solution set of  $W_c(X,\M_{\xi_-})\succ 0$.
\end{lemma}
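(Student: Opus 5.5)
The plan is to compute how $W_c$ depends on the shift $\xi$ and then reduce both claims to that single identity.

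\textbf{Key identity.} Substituting $A+\frac{\xi}{2}E$ for $A$ and $D-\frac{\xi}{2}I_m$ for $D$ in \eqref{KYP-LMI_c}, with $X$ kept as the free variable (written $Y$ below to separate it from the fixed certificate), gives
\[
W_c(Y,\M_\xi)=W_c(Y,\M)-\xi\,\diag{E^{\mathsf{H}}YE,I_m}.
\]
The $(1,1)$ block loses $\frac{\xi}{2}(E^{\mathsf{H}}YE+E^{\mathsf{H}}YE)$, the $(2,2)$ block loses $\xi I_m$, and the off-diagonal blocks do not change. This holds for every Hermitian $Y$.

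\textbf{Passivity of $\M_{\xi_\pm}$.} Take the given $X\in\XWpdpd$ with $\xi^*(X)>0$. By the definition of $\xi^*(X)$ in Theorem~\ref{thm:maxoverX}, for any $0\le\xi\le\xi^*(X)$ we have
\[
W_c(X,\M_\xi)=W_c(X,\M)-\xi\,\diag{E^{\mathsf{H}}XE,I_m}\succeq(\xi^*(X)-\xi)\,\diag{E^{\mathsf{H}}XE,I_m}\succeq 0 .
\]
So $X\succ0$ is a certificate for $\M_{\xi_-}$ and for $\M_{\xi_+}$. Both systems are passive by the LMI characterization \eqref{LMI_c}. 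Minimality is preserved because $(\lambda E-A-\frac{\xi}{2}E,B)$ is the original pencil evaluated at the shifted point $\lambda-\frac{\xi}{2}$, and the same holds for observability. The inequality is strict for $\xi_-<\xi^*(X)$, since $\diag{E^{\mathsf{H}}XE,I_m}\succ0$ when $E$ is nonsingular and $X\succ 0$. Hence $X\in\XWpdpd$ for $\M_{\xi_-}$.

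\textbf{Inclusion of solution sets.} Let $Y\succ0$ satisfy $W_c(Y,\M_{\xi_+})\succeq0$. Applying the identity twice gives
\[
W_c(Y,\M_{\xi_-})=W_c(Y,\M_{\xi_+})+(\xi_+-\xi_-)\,\diag{E^{\mathsf{H}}YE,I_m}.
\]
The second term is positive definite because $\xi_+>\xi_-$, $E$ is nonsingular and $Y\succ0$. Therefore $W_c(Y,\M_{\xi_-})\succ0$, which proves the inclusion.

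\textbf{Main obstacle.} The delicate point is that the inclusion needs $\diag{E^{\mathsf{H}}YE,I_m}$ to be definite, so both $Y\succ0$ and nonsingular $E$ are required. I will read the solution set of $W_c(Y,\M_{\xi_+})\succeq0$ as restricted to $Y\succ0$, as in $\XWpd$. If semidefinite $Y$ were allowed, the inclusion would only give $\succeq$, and I would remark on that.
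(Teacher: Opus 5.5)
Your proposal is correct and follows essentially the same route as the paper. Both use the shift identity $W_c(X,\M_\xi)=W_c(X,\M)-\xi\,\diag{E^{\mathsf{H}}XE,I_m}$, the resulting chain of inequalities down to $W_c(X,\M_{\xi^*(X)})\succeq 0$ to get passivity, and the positive definite increment $(\xi_+-\xi_-)\,\diag{E^{\mathsf{H}}YE,I_m}$ for the inclusion. Your inclusion step is slightly cleaner than the paper's, because you add this increment directly to a semidefinite $W_c(Y,\M_{\xi_+})$ instead of routing through $\xi^*(Y)$.
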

\begin{proof}
The LMIs for two different values $\xi_-<\xi_+$ are related as
\[
 W_c(X,\M_{\xi_+}) = W_c(X,\M_{\xi_-}) - (\xi_+-\xi_-) \diag{E^{\mathsf{H}}XE,I_m}.
\]
Since $X\in\XWpdpd$, $\xi^*(X)>0$ and $\diag{E^{\mathsf{H}}XE,I_m}\succ 0$, it follows that
\begin{equation} \label{ineqsc}
W_c(X,\M) \succeq W_c(X,\M_{\xi_-}) \succ W_c(X,\M_{\xi_+})\succeq W_c(X,\M_{\xi^*(X)}) \succeq 0. \hfill
\end{equation}
Hence, the systems $\M_{\xi_-}$ and $\M_{\xi_+}$ are passive, since the associated LMIs have a nonempty solution set. Now consider {\em any} $X$ for which $W_c(X,\M_{\xi_+})\succ 0$.
Since $\xi_+$ is positive, so is $\xi^*(X)$ and hence $X\in \XWpdpd$. It then follows from \eqref{ineqsc} that $W_c(X,\xi_-)\succ 0$. Hence,
the solution set of $W_c(X,\M_{\xi_+}) \succeq 0$ is included in the solution set of $W_c(X,\M_{\xi_-}) \succ 0$.
\end{proof}

Lemma~\ref{inclusionc} implies that for a given $X\in \XWpdpd$, the solution sets of  $W_c(X,\M_\xi) \succ 0$ are shrinking with increasing $\xi$.
\begin{theorem} \label{th:pasrad}
Let $\M$ be a given passive model with transfer function ${\cal T}(s)$ and let $X=c^2T^\mathsf{H}T$ be a certificate for the corresponding LMI $W_c(X,\M)\succeq 0$ with invertible $E$, then the extremal value $\xi^*(X)$ of the LMI $W_c(X,\M) - \xi \diag{X,I_m}\succ 0$ is the smallest eigenvalue of the positive definite pencil
\begin{equation} \label{eq:posdef}
    W_c(X,\M)-\xi \diag{E^\mathsf{H}XE,I_m}. 
\end{equation} 
Since $E$ is invertible, this is also the smallest eigenvalue of the normalized matrix
$$ \widetilde W(c^2I,\widetilde \M_T):=\left[\begin{array}{cc} -c^2(A_TE_T^{-1})^\mathsf{H} - c^2 (A_TE_T^{-1}) & (C_TE_T^{-1})^\mathsf{H}-c^2B_T \\ C_TE_T^{-1} - c^2B_T^\mathsf{H} & D_T+D_T^\mathsf{H}\end{array}\right],
$$
corresponding to the model $\widetilde \M:=\{A_TE_T^{-1},B_T,C_TE_T^{-1},D_T,I_n\}$ with transfer function ${\cal T}(s)$.
\end{theorem}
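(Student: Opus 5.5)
The first claim is essentially Theorem~\ref{thm:maxoverX}. Since $E$ is invertible and $X\succ 0$, the matrix $\diag{E^\mathsf{H}XE,I_m}$ is positive definite. The pencil \eqref{eq:posdef} is therefore a Hermitian definite pencil, and all its eigenvalues are real. The largest $\xi$ with $W_c(X,\M)-\xi\diag{E^\mathsf{H}XE,I_m}\succeq 0$ is exactly its smallest eigenvalue. To see this, factor $\diag{E^\mathsf{H}XE,I_m}=L^\mathsf{H}L$. The inequality is then equivalent to $L^{-\mathsf{H}}W_c(X,\M)L^{-1}\succeq \xi I$, whose extremal $\xi$ is $\lambda_{\min}(L^{-\mathsf{H}}W_cL^{-1})$. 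I would read the $\diag{X,I_m}$ in the statement as the same weight, since the paper defines $\xi^*(X)$ through \eqref{xi}.

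For the second claim, I would write down an explicit congruence. Put $X=c^2T^\mathsf{H}T$, so that $XE = c^2T^\mathsf{H}TE = c^2T^\mathsf{H}E_TT^{-\mathsf{H}}$, using $E_T=TET^\mathsf{H}$. As in the derivation of \eqref{gssPH},
\[
\diag{T,I_m}\,W_c(X,\M)\,\diag{T^\mathsf{H},I_m}=W_c(c^2I_n,\M_T),
\qquad
\diag{T,I_m}\,\diag{E^\mathsf{H}XE,I_m}\,\diag{T^\mathsf{H},I_m}=\diag{c^2E_T^\mathsf{H}E_T,I_m}.
\]
Next I would apply the further congruence with $\diag{E_T^{-\mathsf{H}},I_m}$ on the left and $\diag{E_T^{-1},I_m}$ on the right. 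Because $E$, and hence $E_T$, is invertible, this turns $\diag{c^2E_T^\mathsf{H}E_T,I_m}$ into $\diag{c^2I_n,I_m}$. It also turns $W_c(c^2I_n,\M_T)$ into
\[
\left[\begin{array}{cc} -c^2(A_TE_T^{-1})^\mathsf{H}-c^2A_TE_T^{-1} & (C_TE_T^{-1})^\mathsf{H}-c^2B_T\\ C_TE_T^{-1}-c^2B_T^\mathsf{H} & D_T+D_T^\mathsf{H}\end{array}\right]=\widetilde W(c^2I,\widetilde\M_T).
\]
This is checked blockwise: for example, $E_T^{-\mathsf{H}}(-c^2A_T^\mathsf{H}E_T-c^2E_T^\mathsf{H}A_T)E_T^{-1}=-c^2(E_T^{-\mathsf{H}}A_T^\mathsf{H}+A_TE_T^{-1})$, and $E_T^{-\mathsf{H}}(C_T^\mathsf{H}-c^2E_T^\mathsf{H}B_T)=(C_TE_T^{-1})^\mathsf{H}-c^2B_T$. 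Congruence preserves the generalized eigenvalues of a pencil. Hence $\xi^*(X)$ is the smallest eigenvalue of the pencil $\widetilde W(c^2I,\widetilde\M_T)-\xi\diag{c^2I_n,I_m}$.

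Finally, $\widetilde\M$ corresponds to the change of variables $\hat x=E_Tx$. Its transfer function is $C_TE_T^{-1}(sI-A_TE_T^{-1})^{-1}B_T+D_T=C_T(sE_T-A_T)^{-1}B_T+D_T$. This equals $\mathcal T(s)$, since \eqref{gsst} preserves the transfer function.

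The main obstacle is the meaning of ``smallest eigenvalue of the normalized matrix''. The metric is $\diag{c^2I_n,I_m}$ rather than the identity, so the statement literally holds when $c=1$. This can always be arranged with the scaling Lemma of Subsection~\ref{sub:real}, i.e.\ by absorbing $c$ into $T$. Otherwise it holds after the additional diagonal congruence by $\diag{c^{-1}I_n,I_m}$, which maps the weight to the identity. I would state explicitly which normalization is meant, and then conclude with the standard fact that for an identity weight the pencil eigenvalues are ordinary eigenvalues.
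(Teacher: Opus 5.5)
Your proof is correct and is the paper's argument: your two successive congruences, first by $\mathrm{diag}(T^{\mathsf{H}},I_m)$ and then by $\mathrm{diag}(E_T^{-1},I_m)$, compose to exactly the single congruence by $\mathrm{diag}((TE)^{-1},I_m)$ that the paper applies to $W_c(X,\mathcal{M})$. Your caveat is also right, and the paper's proof passes over it: the weight $\mathrm{diag}(E^{\mathsf{H}}XE,I_m)$ is mapped to $\mathrm{diag}(c^2I_n,I_m)$, not the identity, so ``smallest eigenvalue of the normalized matrix'' holds literally only for $c=1$ (for instance after absorbing $c$ into $T$), and reading the statement's $\mathrm{diag}(X,I_m)$ as $\mathrm{diag}(E^{\mathsf{H}}XE,I_m)$ is the correct interpretation.
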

\begin{proof}
The matrices $W_c(X,\M)$ and $\xi \diag{E^\mathsf{H}XE,I_m}$ are positive definite, which implies that all eigenvalues are positive, and are equal to the eigenvalues of 
$$ \widetilde W_c(c^2I,\widetilde \M_T)= \diag{(TE)^{-\mathsf{H}},I_m}
W_c(X,\M)\diag{(TE)^{-1},I_m}.
$$  
The smallest value of $\xi$ for which the pencil \eqref{eq:posdef} becomes singular is therefore the smallest eigenvalue of that matrix. 
\end{proof}

The extremal value $\xi^*(X)$ is a function of the chosen $X$ and we would like to know which value $\xi^*(X)$ corresponds to the largest possible passivity radius. We now show that we can obtain this value by relating it to the passivity of the transfer function
\[
  \T_\xi(s):=C((s-\xi/2)E -A)^{-1}B+(D-\xi I_m/2),
\]
of the modified system $\M_\xi$, and this without finding $X$ first. Note that we have  assumed that the associated system is minimal, a property which is not changed by the shift.
It follows from the discussion of Section \ref{sec:PH} that this transfer function corresponds to a {\em strictly} passive system if and only if (i) the transfer function  $\T_\xi(s)$ is
asymptotically stable and (ii) the matrix function $\Phi_\xi(s):=\T^\mathsf{H}_\xi(-s)+\T_\xi(s)$ is positive definite on the $\imath \omega$ axis, with $\omega=\pm\infty$ included.
It has been presented in Section \ref{sec:PH} that the zeros of $\Phi_\xi(s)$ are also the eigenvalues of the Hamiltonian matrix
$H_\xi$ given by
% {\small
\begin{equation*}
\left[ \begin{array}{cc} AE^{-1}+\frac{\xi}{2}I_n & 0 \\ 0 & -E^{-\mathsf{H}}A^{\mathsf{H}}-\frac{\xi}{2} I_n \end{array}\right] + \left[ \begin{array}{cc}  B \\ E^{-\mathsf{H}}C^{\mathsf{H}} \end{array}
\right] (D^{\mathsf{H}}+D-\xi I_m)^{-1} \left[ \begin{array}{cc} CE^{-1} & -B^{\mathsf{H}} \end{array} \right],
\end{equation*}
% }
%
provided that $D^{\mathsf{H}}+D-\xi I_m \succ 0$ and the realization of $\M_\xi$ is minimal.
The three algebraic conditions corresponding to strict passivity of $\T_\xi(s)$ are, therefore, given by
\begin{enumerate}
	\item [A1.] $A+\frac{\xi}{2} E- s E $ has all its eigenvalues in the open left half plane (stability).
	\item [A2.] $D^{\mathsf{H}}+D-\xi I_m$ has positive eigenvalues (positive-realness at $\omega=\infty$).
	\item [A3.] $H_\xi$ has no eigenvalues on the $\imath\omega$ axis  (positive-realness at finite $\omega$).
\end{enumerate}
All of these conditions are phrased in terms of eigenvalues of certain matrices that depend on the parameter $\xi$. Since eigenvalues are continuous functions of the matrix elements,
one can consider limiting cases for the above conditions.
As explained in Section \ref{sec:PH}, the passive transfer functions are limiting cases of the strictly passive ones. These limiting cases correspond to the
first value of $\xi$ where one of the three algebraic conditions fails. Note that condition A3. is more robustly expressed in terms of the eigenvalues of the matrix pencil
\begin{equation*} %\label{statespaceXi}
S_\xi(s) :=
\left[ \begin{array}{cc|c} 0 & A+\frac{\xi}{2}E-sE & B \\
	A^{\mathsf{H}}+\frac{\xi}{2}E+sE & 0 & C^{\mathsf{H}} \\ \hline B^{\mathsf{H}} & C & D^{\mathsf{H}}+D-\xi I_m  \end{array} \right].
\end{equation*}

It is obvious  that the conditions A1.-A3. are not satisfied anymore for large enough $\xi\in \mathbb{R}$. For instance, for $\xi > \lambda_{\min}(D^{\mathsf{H}}+D)$ the second condition fails and $\lambda_{\min}(D^{\mathsf{H}}+D)$ is thus a simple upper bound for $\xi^*(X)$ for any $X$.

In Subsection \ref{ss:HEC}, we will show that finding 
the supremum over all values $\xi^*(X)$ can be solved using the Hybrid Expansion Contraction method described in \cite{MitV23}.

\subsection{The DT Case}

The discussion  of this case is very similar to the CT case, but the LMI's and equations are quite different. We consider the constrained LMI
\begin{equation} \label{xid}  
 \widehat W_d(X,\M) \succeq \xi \diag{X,E^{\mathsf{H}}XE,I_m}.
\end{equation}
Then the following theorem gives a bound on how large we can choose $\xi$ in this LMI.
\begin{theorem}  \label{thm:maxoverXd}
Let $\M:=\{A,B,C,D,E\}$ be a minimal realization of a passive system with non-singular $E$, and let $X$ be a fixed certificate in the corresponding set $\XWpd$. Then there is a unique $\xi^*(X)$ which is maximal for the matrix inequality \eqref{xid}, is strictly smaller than 1 and is equal to the smallest eigenvalue of the positive definite pencil 
$ \widehat W_d(X,\M) - \xi \diag{X,E^{\mathsf{H}}XE,I_m}$. This maximal value $\xi^*(X)$ equals zero if $X$ is a boundary point of $\XWpd$ and is positive if and only if $X$ is in $\XWpdpd$.  Moreover, $X$ is a valid certificate for the passivity of the shifted system \begin{equation} \label{eq:shiftedDT}
\M_\xi=\{A_\xi,B_\xi,C_\xi,D_\xi,E_\xi\}
:=\left\{\frac{A}{(1-\xi)},\frac{B}{(1-\xi)},\frac{C}{(1-\xi)},\frac{D-\xi I_m/2}{(1-\xi)},E\right\}.
\end{equation}
\end{theorem}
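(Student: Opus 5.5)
The plan is to mirror the proof of Theorem~\ref{thm:maxoverX}, with the extended LMI matrix $\widehat W_d(X,\M)$ playing the role of $W_c(X,\M)$. The first step is to check that $\mathcal D:=\diag{X,E^{\mathsf{H}}XE,I_m}$ is positive definite. This holds because $X\succ 0$ and $E$ is non-singular. The pencil $\widehat W_d(X,\M)-\xi\mathcal D$ is therefore a Hermitian/positive definite pencil. With a Cholesky factor $\mathcal D=L^{\mathsf{H}}L$, it is congruent to $L^{\mathsf{H}}\bigl(L^{-\mathsf{H}}\widehat W_d(X,\M)L^{-1}-\xi I\bigr)L$. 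So its eigenvalues are real, and the inequality \eqref{xid} holds exactly for $\xi\le\lambda_{\min}(L^{-\mathsf{H}}\widehat W_d L^{-1})$. This gives existence and uniqueness of the maximal value $\xi^*(X)$ and identifies it with the smallest eigenvalue of the pencil.

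Next comes the sign of $\xi^*(X)$. I will use the congruence noted earlier: $\widehat W_d(X,\M)$ is congruent to $\diag{X,W_d(X,\M)}$. Hence for $X\in\XWpd$ we have $\widehat W_d(X,\M)\succeq 0$, so $\xi^*(X)\ge 0$. If $X\in\XWpdpd$, then $\widehat W_d(X,\M)\succ 0$ and the smallest eigenvalue is positive. Conversely, $\xi^*(X)>0$ gives $\widehat W_d(X,\M)\succeq\xi^*\mathcal D\succ 0$, hence $W_d(X,\M)\succ 0$ and $X\in\XWpdpd$. For a boundary point of $\XWpd$ we have $\det W_d(X,\M)=0$, so $\widehat W_d(X,\M)$ is singular and $\xi^*(X)=0$.

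For the bound $\xi^*(X)<1$ I will look at the pencil at $\xi\ge 1$. Its $(1,1)$ block is $(1-\xi)X\preceq 0$, and the off-diagonal block row is $X[\,A\;\;B\,]$. Minimality of $\M$ includes controllability at $\lambda=0$, so $\rank{[\,A\;\;B\,]}=n$ and this block row is nonzero. Now take a vector $w$ with $[\,A\;\;B\,]w\neq 0$ and test the pencil on $(\epsilon Xw_0,\,-w)$ for small $\epsilon>0$, where $w_0:=[\,A\;\;B\,]w$ (so the first component is $\epsilon X w_0$). The quadratic form is negative for small $\epsilon$, so the pencil cannot be positive semidefinite for any $\xi\ge 1$. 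Hence $\xi^*(X)<1$. This is the step that needs the most care, since it is where minimality is actually used.

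Finally, I will verify the claim about the shifted system by direct substitution of \eqref{eq:shiftedDT} into $\widehat W_d$. A short computation gives
\[
(1-\xi)\,\widehat W_d(X,\M_\xi)=\widehat W_d(X,\M)-\xi\,\diag{X,E^{\mathsf{H}}XE,I_m}.
\]
Both sides have $(1,1)$ block $(1-\xi)X$, $(2,2)$ block $(1-\xi)E^{\mathsf{H}}XE$, and $(3,3)$ block $D^{\mathsf{H}}+D-\xi I_m$, and the off-diagonal blocks $XA$, $XB$, $C$ agree. Since $1-\xi>0$ for $0\le\xi\le\xi^*(X)<1$, we get $\widehat W_d(X,\M_\xi)\succeq 0$. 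By the congruence, $W_d(X,\M_\xi)\succeq 0$ with $X\succ 0$, so $X$ is a certificate for passivity of $\M_\xi$. At $\xi=\xi^*(X)$ the determinant vanishes, exactly as in the CT proof.
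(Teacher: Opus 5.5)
Your route is the paper's: mirror the CT proof, then read off $\M_\xi$ from the identity $(1-\xi)\widehat W_d(X,\M_\xi)=\widehat W_d(X,\M)-\xi\mathcal{D}$. Your first, second and last steps are correct. The step that fails as written is the proof of $\xi^*(X)<1$.

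Write $P(\xi):=\widehat W_d(X,\M)-\xi\mathcal{D}$, and let $N(\xi)$ be its trailing block $\left[\begin{array}{cc}(1-\xi)E^{\mathsf{H}}XE & C^{\mathsf{H}}\\ C & D^{\mathsf{H}}+D-\xi I_m\end{array}\right]$. On your test vector $(\epsilon Xw_0,-w)$ the quadratic form equals
\[
\epsilon^2(1-\xi)\,w_0^{\mathsf{H}}X^3w_0-2\epsilon\,\|Xw_0\|_2^2+w^{\mathsf{H}}N(\xi)w .
\]
As $\epsilon\to0$ this tends to $w^{\mathsf{H}}N(\xi)w$, and nothing forces that to be negative. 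For instance, take $\xi=1$, $w=(0,w_u)$ with $Bw_u\neq0$, and a passive model with $D^{\mathsf{H}}+D\succ I_m$. Then $w^{\mathsf{H}}N(1)w=w_u^{\mathsf{H}}(D^{\mathsf{H}}+D-I_m)w_u>0$, so the form is positive for small $\epsilon$.

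The scaling is simply the wrong way round. Let $\epsilon\to\infty$ instead, or equivalently use $(Xw_0,-\epsilon w)$ with $\epsilon\to0$. For $\xi>1$ the negative $\epsilon^2$ term then dominates. For $\xi=1$ that term vanishes, and $-2\epsilon\|Xw_0\|_2^2$ drives the form to $-\infty$.

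A cleaner argument is also available. Since $\mathcal{D}\succ0$, the feasible set of \eqref{xid} is the interval $(-\infty,\xi^*(X)]$, so it suffices to show $P(1)\not\succeq0$. But $P(1)$ has a zero $(1,1)$ block, and a positive semidefinite matrix with a zero diagonal block must have zero off-diagonal blocks in that block row. That contradicts $X[\,A\;\;B\,]\neq0$, which your minimality argument correctly supplies.

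The case $\xi=1$ is exactly what the paper's one-line remark (that the identity ``implies that $\xi$ must be smaller than 1'') glosses over, since the identity is undefined there. With this correction, your argument is more complete than the paper's.
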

\begin{proof} 
The proof is completely analogous to the proof of Theorem \ref{thm:maxoverX} in the CT case, except for the shifted model \eqref{eq:shiftedDT}, which is obtained as follows.
The equality in the left hand side of the LMI
\begin{equation*} %\label{shiftedd}  
(1-\xi)\widehat W_d(X,\M_\xi) = \widehat W_d(X,\M) - \xi \left[\begin{array}{ccc} X  & 0 & 0 \\ 0 & E^{\mathsf{H}}XE & 0 \\ 0 & 0 & I_m \end{array}\right]\succeq 0
\end{equation*} 
is clearly satisfied by the matrix
$$
\widehat W_d(X,\M_\xi):= \left[\begin{array}{ccc} X  &  XA_\xi & XB_\xi \\ 
A_\xi^{\mathsf{H}}X & E^{\mathsf{H}}XE & C_\xi^{\mathsf{H}} \\   B_\xi^{\mathsf{H}}X & C_\xi & D_\xi^{\mathsf{H}}+D_\xi \end{array}\right]
$$ 
which corresponds to the modified model \eqref{eq:shiftedDT}, and also implies that $\xi$ must be smaller than 1.
\end{proof}

The following Lemma is again analogous to the CT case and is therefore given without proof.
\begin{lemma} \label{inclusiond}
For every $X\succ 0$ in $\XWpdpd$ and any  $\xi_-$ and $\xi_+$ satisfying  $0\le \xi_- < \xi_+  \le \xi^*(X)$, the passivity LMIs for the systems $\M_{\xi_-}$ and $\M_{\xi_+}$ are satisfied. Moreover, the whole solution set of 
$\widehat W_d(X,\M_{\xi_+})\succeq 0$ is included in the solution set of  $\widehat W_d(X,\M_{\xi_-})\succ 0$.
\end{lemma}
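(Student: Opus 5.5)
We mirror the proof of Lemma~\ref{inclusionc}, replacing $W_c$ by the expanded matrix $\widehat W_d$ and the weight $\diag{E^{\mathsf{H}}XE,I_m}$ by $P(X):=\diag{X,E^{\mathsf{H}}XE,I_m}$. The key identity comes from the proof of Theorem~\ref{thm:maxoverXd}: for any $\xi<1$,
\[
(1-\xi)\,\widehat W_d(X,\M_\xi)=\widehat W_d(X,\M)-\xi\, P(X).
\]
It holds for every Hermitian $X$, not only for the fixed certificate. Writing it for $\xi_-$ and $\xi_+$ and subtracting gives
\[
(1-\xi_+)\,\widehat W_d(X,\M_{\xi_+}) = (1-\xi_-)\,\widehat W_d(X,\M_{\xi_-}) - (\xi_+-\xi_-)\,P(X).
\]
This replaces the simpler CT relation; the only new features are the positive scalar factors $1-\xi_\pm$. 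These are positive because Theorem~\ref{thm:maxoverXd} gives $\xi_\pm\le\xi^*(X)<1$.

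\emph{Passivity of $\M_{\xi_\pm}$.} For the fixed $X\in\XWpdpd$ we have $X\succ0$, and $E$ is nonsingular, so $P(X)\succ0$. The maximality of $\xi^*(X)$ gives $\widehat W_d(X,\M)-\xi^*(X)P(X)\succeq0$. Hence
\[
\widehat W_d(X,\M)\succeq \widehat W_d(X,\M)-\xi_-P(X)\succ \widehat W_d(X,\M)-\xi_+P(X)\succeq \widehat W_d(X,\M)-\xi^*(X)P(X)\succeq0 .
\]
Dividing the relevant terms by $1-\xi_\pm>0$ shows $\widehat W_d(X,\M_{\xi_-})\succ0$ and $\widehat W_d(X,\M_{\xi_+})\succeq0$. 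Since $\widehat W_d$ is congruent to $\diag{X,W_d}$ and $X\succ0$, the same $X$ certifies $W_d(X,\M_{\xi_\pm})\succeq0$. So the passivity LMIs of both shifted systems are satisfied.

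\emph{Inclusion of solution sets.} Take any $Y\succ0$ (positive definiteness is part of the solution set $\XWpd$ for $\M_{\xi_+}$) with $\widehat W_d(Y,\M_{\xi_+})\succeq0$. Apply the subtracted identity with $X$ replaced by $Y$:
\[
(1-\xi_-)\,\widehat W_d(Y,\M_{\xi_-})=(1-\xi_+)\,\widehat W_d(Y,\M_{\xi_+})+(\xi_+-\xi_-)\,P(Y)\succ 0,
\]
because the first term is $\succeq0$ and $P(Y)\succ0$, using $Y\succ0$ and $E$ invertible. Dividing by $1-\xi_->0$ gives $\widehat W_d(Y,\M_{\xi_-})\succ0$, which is the claimed inclusion.

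The main obstacle is making sure the scaling factors never change sign or vanish. This needs $\xi_+<1$, which we take from the bound $\xi^*(X)<1$ in Theorem~\ref{thm:maxoverXd}. It also needs $E$ nonsingular, so that the middle block $E^{\mathsf{H}}YE$ of $P(Y)$ is positive definite and strict definiteness is gained. We assume this, as in Theorem~\ref{thm:maxoverXd}.
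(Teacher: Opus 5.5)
Your proof is correct and takes essentially the same route as the paper, which omits the DT proof and defers to the CT argument of Lemma~\ref{inclusionc}: a linear relation between the LMIs at $\xi_-$ and $\xi_+$, the chain of inequalities down to $\xi^*(X)$, and the same reasoning applied to an arbitrary solution. Your explicit handling of the scalar factors $1-\xi_\pm>0$, using $\xi^*(X)<1$ from Theorem~\ref{thm:maxoverXd}, is exactly the adaptation the paper leaves implicit, and applying the identity directly to an arbitrary $Y\succ 0$ is a slightly cleaner form of the paper's step through $\xi^*(Y)\ge\xi_+$.
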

Lemma~\ref{inclusiond} implies that for a given $X\in \XWpdpd$, the solution sets of  $\widehat W_d(X,\M_\xi) \succ 0$ are shrinking with increasing $\xi$.
\begin{theorem} \label{th:pasradd}
Let $\M$ be a given passive model with transfer function ${\cal T}(z)$ and let $X=c^2T^\mathsf{H}T$ be a certificate for the corresponding LMI $\widehat W_d(X,\M)\succeq 0$ with invertible $E$, then the extremal value $\xi^*(X)$ of the LMI $\widehat W_d(X,\M) - \xi \diag{X,E^\mathsf{H}XE,I_m}$ is the smallest eigenvalue of the positive definite pencil
\begin{equation} \label{eq:posdefd}
    \widehat W_d(X,\M)-\xi \diag{X,E^\mathsf{H}XE,I_m}. 
\end{equation} 
Since $E$ is invertible, this is also the smallest eigenvalue of the normalized matrix $$\widehat W_d(c^2I,\widetilde \M_T):=
\left[\begin{array}{ccc}
c^2 I_n & c^2(A_TE_T^{-1}) & c^2 B_T \\ c^2 (A_TE_T^{-1})^\mathsf{H}  & c^2 I_n & (C_TE_T^{-1})^\mathsf{H} \\ c^2B_T^\mathsf{H} & C_TE_T^{-1}  & D_T+D_T^\mathsf{H}\end{array}\right],
$$
corresponding to the model $\widetilde \M:=
\{A_TE_T^{-1},B_T,C_TE_T^{-1},D_T,I_n\}$ with transfer function ${\cal T}(z)$.
\end{theorem}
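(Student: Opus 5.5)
The plan mirrors the proof of Theorem~\ref{th:pasrad}. The goal is to show that the generalized eigenvalues of the pencil \eqref{eq:posdefd} coincide with the eigenvalues of a matrix obtained from $\widehat W_d(X,\M)$ by a congruence that maps $\diag{X,E^{\mathsf{H}}XE,I_m}$ to a multiple of the identity.

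First, take $X\in\XWpdpd$. Then $\widehat W_d(X,\M)\succ 0$, since it is congruent to $\diag{X,W_d(X,\M)}$. Also $\diag{X,E^{\mathsf{H}}XE,I_m}\succ 0$, because $X\succ 0$ and $E$ is invertible. The pencil \eqref{eq:posdefd} is therefore a Hermitian definite pencil, and all its eigenvalues are real and positive. The largest $\xi$ for which \eqref{xid} holds is exactly the first $\xi$ at which the pencil becomes singular, which is its smallest eigenvalue. This part is word for word the argument of Theorem~\ref{thm:maxoverXd}.

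Second, write $X=c^2T^{\mathsf{H}}T$ and use the congruence matrix
\[
Z:=\diag{T^{-1},(TE)^{-1}T^{\mathsf{H}}\,T^{-\mathsf{H}}\!,\,I_m},
\]
or more simply $Z:=\diag{T^{-1},E^{-1}T^{-1},I_m}$, chosen so that the diagonal blocks become $c^2I_n$, $c^2I_n$ and $I_m$.

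Then compute $Z^{\mathsf{H}}\widehat W_d(X,\M)Z$ block by block:
\begin{itemize}
\item the $(1,2)$ block is $T^{-\mathsf{H}}c^2T^{\mathsf{H}}TAE^{-1}T^{-1}=c^2\,TAE^{-1}T^{-1}$;
\item the $(1,3)$ block is $c^2TB$;
\item the $(2,3)$ block is $T^{-\mathsf{H}}E^{-\mathsf{H}}C^{\mathsf{H}}$.
\end{itemize}

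These blocks must be matched with $c^2A_TE_T^{-1}$, $c^2B_T$ and $(C_TE_T^{-1})^{\mathsf{H}}$. The identity needed is $A_TE_T^{-1}=TAT^{\mathsf{H}}(TET^{\mathsf{H}})^{-1}=TAE^{-1}T^{-1}$, which gives the $(1,2)$ block. The identity $C_TE_T^{-1}=CT^{\mathsf{H}}T^{-\mathsf{H}}E^{-1}T^{-1}=CE^{-1}T^{-1}$ gives the $(2,3)$ block. Also $B_T=TB$. So the congruence produces exactly the displayed normalized matrix $\widehat W_d(c^2I,\widetilde\M_T)$.

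Third, observe that $\widetilde\M$ has the same transfer function $\mathcal T(z)$. This follows from $C_TE_T^{-1}(zI-A_TE_T^{-1})^{-1}B_T=C_T(zE_T-A_T)^{-1}B_T$ together with invariance under \eqref{gsst}.

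The main obstacle is that the congruence turns the pencil into $\widehat W_d(c^2I,\widetilde\M_T)-\xi\,\diag{c^2I_n,c^2I_n,I_m}$, not into a standard eigenproblem. Its eigenvalues equal those of the matrix itself only when $c=1$. I would handle this with the scaling lemma of Subsection~\ref{sub:real}: normalize so that the certificate is $I_n$, that is, absorb $c$ into the model. Otherwise one must interpret ``smallest eigenvalue of the normalized matrix'' relative to the weight $\diag{c^2I,c^2I,I}$. I would state this caveat explicitly and carry out the argument with $c=1$, where the claim is exact.
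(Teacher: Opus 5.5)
Your proposal is correct and is essentially the paper's proof. It uses the same Hermitian-definite-pencil argument for $\xi^*(X)$ and then the same congruence with $\diag{T^{-1},(TE)^{-1},I_m}$, and your block identities $A_TE_T^{-1}=TAE^{-1}T^{-1}$ and $C_TE_T^{-1}=CE^{-1}T^{-1}$ are right.

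Your caveat is also valid, and the paper's one-line proof passes over it. The congruence sends the weight $\diag{X,E^{\mathsf{H}}XE,I_m}$ to $\diag{c^2I_n,c^2I_n,I_m}$, not to the identity. So the identification with the ordinary eigenvalues of $\widehat W_d(c^2I,\widetilde\M_T)$ is exact only when $c=1$, i.e., after absorbing $c$ into $T$. Equivalently, rescale as in Subsection~\ref{sub:real} and then re-form the $E=I_n$ model.
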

\begin{proof}
The matrices $\widehat W_d(X,\M)$ and $\diag{X,E^\mathsf{H}XE,I_m}$ are positive definite, which implies that all eigenvalues are positive, and are equal to the eigenvalues of 
$$ \widehat W_d(c^2I,\widetilde \M_T)= 
\diag{T^{-\mathsf{H}},(TE)^{-\mathsf{H}},I_m}
W_c(X,\M)\diag{T^{-1},(TE)^{-1},I_m}.
$$  
The smallest positive value of $\xi$ for which the pencil \eqref{eq:posdefd} is singular is therefore the smallest eigenvalue of that matrix. 
\end{proof}

Also here, we can find the extremal value $\xi^*(X)$ by relating it to the passivity of the transfer function of the modified system $\M_\xi$ which is minimal since $\M$ was assumed to be minimal~:
$$  \T_\xi(z):=C_\xi(zE -A_\xi)^{-1}B_\xi+D_\xi,
$$
and this without computing $X$.
It follows from the discussion of Section \ref{sub:DT} that this transfer function corresponds to a {\em strictly} passive system if and only if the following two conditions are satisfied~: 
(i) the transfer function  $\T_\xi(z)$ is asymptotically stable and 
(ii) the matrix function $\Phi_\xi(z):=\T^\mathsf{H}_\xi(z^{-1})+\T_\xi(z)$ is strictly positive on the unit circle $e^{\imath \omega}, \omega\in[-\pi,\pi]$.  
It has been shown in Section \ref{sub:DT} that the zeros of $\Phi_\xi(z)$ are the eigenvalues of the symplectic matrix 
\begin{equation}
S_\xi:=\left[\begin{array}{cc} E &  B_\xi (D_\xi^{\mathsf{H}}+D_\xi)^{-1} B_\xi^{\mathsf{H}} \\
 0 & (A_\xi-B_\xi (D_\xi^{\mathsf{H}}+D_\xi)^{-1} C_\xi)^{\mathsf{H}} \end{array}\right]^{-1} \left[\begin{array}{cc} A_\xi-B_\xi (D_\xi^{\mathsf{H}}+D_\xi)^{-1} C_\xi & 0 \\
C_\xi^{\mathsf{H}} (D_\xi^{\mathsf{H}}+D_\xi)^{-1} C_\xi & E \end{array}\right],
\end{equation}
which are also the finite eigenvalues of the pencil
\begin{equation*}  
z  \left[ \begin{array}{ccc}  0 & -E & 0 \\ A_\xi^\mathsf{H} & 0 & 0 \\ 
B_\xi^{\mathsf{H}} & 0 & 0 \end{array}
\right] + \left[ \begin{array}{ccc} 0 & A_\xi & B_\xi \\ -E^{\mathsf{H}} & 0 & C^{\mathsf{H}}_\xi \\
0 & C_\xi & D^{\mathsf{H}}_\xi+D_\xi \end{array}\right] 
\end{equation*}
or equivalently, those of the pencil
\begin{equation} \label{xipencil}  
z  \left[ \begin{array}{ccc}  0 & (\xi-1) E & 0 \\ A^\mathsf{H} & 0 & 0 \\ 
B^{\mathsf{H}} & 0 & 0 \end{array}
\right] + \left[ \begin{array}{ccc} 0 & A & B \\ (\xi-1)E^{\mathsf{H}} & 0 & C^{\mathsf{H}} \\
0 & C & D^{\mathsf{H}}+D - \xi I_m \end{array}\right] 
\end{equation}
and that the realization of $\M_\xi$ is minimal. 
The algebraic conditions corresponding to strict passivity of $\T_\xi(z)$ are therefore
\begin{enumerate}
	\item[A1.] $A_\xi $ has all its eigenvalues inside the unit disc (stability)
	\item[A2.] the pencil \eqref{xipencil} has no eigenvalues on the unit circle  (positive realness).
\end{enumerate}
These conditions are phrased in terms of eigenvalues of certain matrices that depend on the parameter $\xi$. Since eigenvalues are continuous functions of the matrix elements, one can consider limiting cases for the above conditions. As explained in Section \ref{sub:DT} the passive transfer functions are limiting cases of strictly passive ones. Those limiting cases correspond to the value of $\xi$ where one of the above conditions does not hold anymore.

\subsection{Root-min problems}  \label{ss:HEC}

The following theorem, proven in \cite{MehV19} and \cite{MehV20},
shows that the best certificate for a given model, is the supremum of all functions $\xi^*(x)$ for all $X\in \XWpd$. We therefore leave out the proof.

\begin{theorem} \label{thm:Xid} Let $\M$ be a strictly passive and minimal system. Then there is a bounded supremum $\Xi:=\sup_\xi \{\xi \; | \; \T_\xi(\lambda) \mathrm{\; is \; strictly \; passive}\}$ where the following properties hold
\begin{enumerate}
\item  $\T_\Xi(\lambda)$ is passive,
\item the solution set of $W(X,\M_\Xi)\succeq 0$ is not empty,
\item the solution set of $W(X,\M_\Xi)\succ 0$ is empty,
\item for any $\xi < \Xi$ the solution set of $W(X,\M_\xi)\succ 0$ is non-empty,
\item  $\Xi:=\sup_X \xi^*(X)$ for all $X\in \XWpd$.
\end{enumerate}
\end{theorem}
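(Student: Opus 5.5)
The argument connects the shifted-system characterization with the extremal values $\xi^*(X)$ through the identity relating $W(X,\M_\xi)$ and $W(X,\M)$. In the CT case this identity is
\[
W_c(X,\M_\xi)=W_c(X,\M)-\xi\,\diag{E^{\mathsf H}XE,I_m},
\]
and in the DT case it is
\[
(1-\xi)\widehat W_d(X,\M_\xi)=\widehat W_d(X,\M)-\xi\,\diag{X,E^{\mathsf H}XE,I_m}.
\]
Consequently, for a fixed $X\succ 0$, we have $W(X,\M_\xi)\succ 0$ if and only if $\xi<\xi^*(X)$, and $W(X,\M_\xi)\succeq 0$ if and only if $\xi\le\xi^*(X)$; in the DT case we also use $\xi<1$. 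I also use the equivalence recalled in Section~\ref{sec:PH}: a minimal model is strictly passive exactly when some $X\succ 0$ gives $W(X,\M_\xi)\succ 0$. This relies on strict passivity implying $X_+\succ X_-$, so that the interior of the certificate set is nonempty. Minimality of $\M_\xi$ is preserved, as the paper notes.

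\textbf{Step 1.} Define $\Xi$ as the stated supremum. The set of admissible $\xi$ is nonempty because $\M$ itself is strictly passive, so $\xi=0$ is admissible. It is bounded above because condition A2 fails in the CT case once $\xi>\lambda_{\min}(D^{\mathsf H}+D)$, and in the DT case $\xi<1$ is forced.

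\textbf{Step 2.} The set of admissible $\xi$ is an interval $(-\infty,\Xi)$ intersected with the relevant range. If $\M_{\xi_1}$ is strictly passive, pick $X$ with $W(X,\M_{\xi_1})\succ 0$. Then $\xi^*(X)>\xi_1$, and the identity gives $W(X,\M_\xi)\succ 0$ for all $\xi\le\xi_1$ (compare Lemmas~\ref{inclusionc} and \ref{inclusiond}).

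\textbf{Step 3.} Openness: the set is open, since $\xi\mapsto W(X,\M_\xi)$ is continuous and positive definiteness is an open condition. Hence $\Xi$ is not admissible, so $W(X,\M_\Xi)\succ 0$ has no solution; this is item 3. Item 4 follows directly from Step 2.

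\textbf{Step 4.} Item 5. Since $\xi^*(X)=\sup\{\xi : W(X,\M_\xi)\succeq 0\}$, every $\xi<\xi^*(X)$ is admissible, so $\sup_X\xi^*(X)\le\Xi$. Conversely, for $\xi<\Xi$ choose $X$ with $W(X,\M_\xi)\succ 0$; then $\xi^*(X)>\xi$, which gives the reverse inequality.

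\textbf{Step 5.} Items 1 and 2. Pass to the limit $\xi_k\uparrow\Xi$ and choose certificates $X_k$ with $W(X_k,\M_{\xi_k})\succ 0$. By Lemma~\ref{inclusionc} (respectively \ref{inclusiond}), each $X_k$ also satisfies $W(X_k,\M)\succeq 0$. The set $\XWpd$ of $\M$ is bounded by \eqref{eq:Ricbound}, so a subsequence converges to some $X_\Xi$, and continuity gives $W(X_\Xi,\M_\Xi)\succeq 0$. Equivalently, by the same identity, one can take $X$ maximizing the continuous function $\xi^*$ over the compact set $\XWpd$.

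I expect the main obstacle to be showing that the limit satisfies $X_\Xi\succ 0$ rather than only $X_\Xi\succeq 0$. I would handle it with the lower bound $X\succeq X_-\succ 0$ from \eqref{eq:Ricbound}, which holds uniformly on $\XWpd$ of the minimal model $\M$. Given $X_\Xi\succ 0$, the existence of a positive definite certificate yields passivity of $\T_\Xi$ by the LMI characterization of Section~\ref{sec:PH}, which proves item 1.
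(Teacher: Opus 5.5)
Your argument is correct in substance, with one repair needed in the DT case (below). For items 1 and 2 it takes a different route from the one the paper relies on.

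The paper gives no proof of its own and defers to the cited works. The surrounding text indicates the intended argument: conditions A1--A3, and the remark that passive transfer functions are limits of strictly passive ones. In that route, $\Xi$ is the first $\xi$ at which one of the spectral conditions A1--A3 fails. $\T_\Xi$ is then passive by continuity, and only afterwards does the non-strict KYP lemma for the minimal model $\M_\Xi$ supply a certificate (item 2). Items 3--5 come from Lemmas~\ref{inclusionc}, \ref{inclusiond} and Theorems~\ref{thm:maxoverX}, \ref{thm:maxoverXd}, much as in your Steps 2--4.

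You reverse the order for items 1--2. Compactness of $\XWpd$ (from $X_-\preceq X\preceq X_+$ with $X_-\succ 0$) and continuity of $\xi^*$ give a certificate with $W(X,\M_\Xi)\succeq 0$ directly, so passivity of $\T_\Xi$ is only the easy direction of KYP. This moves the limit passage into the certificate space, where it is a closedness statement for the LMI. You therefore avoid the delicate step ``a limit of strictly passive systems is passive'', which on the spectral side requires tracking semisimplicity of boundary eigenvalues and the point at infinity. The spectral route, in turn, characterizes $\Xi$ without any certificate, which is what the HEC computation exploits. It also does not need the uniform bounds \eqref{eq:Ricbound} on all LMI solutions.

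The repair concerns the DT case, where $\xi<1$ is not ``forced'' by strict passivity. For $\xi>1$ the factor $1-\xi$ is negative, so $\widehat W_d(X,\M_\xi)\succ 0$ is equivalent to $\widehat W_d(X,\M)-\xi\,\mathrm{diag}(X,E^{\mathsf H}XE,I_m)\prec 0$, and this holds for every $X\succ 0$ once $\xi$ is large. Indeed $\T_\xi(z)=\bigl(\T((1-\xi)z)-\frac{\xi}{2}I_m\bigr)/(1-\xi)\to \frac12 I_m$, with all poles tending to $0$. So the set of $\xi$ for which $\T_\xi$ is strictly passive is unbounded above in the DT case. $\Xi$ must therefore be defined as a supremum over $\xi<1$, i.e.\ over the component containing $0$. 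With that convention your Steps 2--5 go through unchanged, and $\Xi=\max_X\xi^*(X)<1$.

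A smaller point: a nonempty interior of $\XWpd$ does not by itself give an $X$ with $W(X,\M)\succ 0$. Justify ``strictly passive $\Rightarrow$ strict certificate'' by the strict KYP lemma, or by your own identity. A strictly passive $\T$ has a uniform margin, so $\T_\varepsilon$ is still passive for small $\varepsilon>0$. The non-strict KYP lemma then gives $X\succ 0$ with $W(X,\M_\varepsilon)\succeq 0$, and the identity yields $W(X,\M)\succ 0$.
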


It is shown in \cite{MitV23} that the passivity radius of a CT or DT system model $\M$ is a root-min problem, which can be solved efficiently using the so-called Hybrid Expansion-Contraction (HEC) algorithm and has ultimate quadratic convergence (we refer to \cite{MitV23} for further details). The following theorem was also shown in \cite{MehV19} and \cite{MehV20} and states important properties that the HEC 
algorithm requires to efficiently compute the supremal value $\Xi$ of a given model $\M$ of a passive transfer function ${\cal T}(\lambda)$.
The theorem applies as well to the CT case (where $W(X,\M_\xi)$ stands for $W_c(X,\M_\xi)$) as to the DT case (where $W(X,\M_\xi)$ stands for $\widehat W_d(X,\M_\xi)$). 

Notice that $\Xi$ is associated with a given model $\M$ but does not require the calculation of any certificate $X$. If one wants to reconstruct a certificate $X^*$ that satisfies $W(X^*,\M_\Xi)\succeq 0$ then one can  solve the stabilizing Riccati solution $X_-$ of the shifted model $\M_\Xi$.

\subsection{Improving robustness of a model}

We point out that the value of $\Xi$ is independent of the model used for representing the transfer function ${\cal T}(\lambda)$. This follows from the fact that the transformations $S$ and $T$ described in \eqref{gsst} commute with the shifting transformations described in \eqref{eq:shiftedCT} and \eqref{eq:shiftedDT}. It then follows from 
Theorem \ref{th:optimal} that one can find optimal models in the set of 
normalized models. This was already observed in \cite{MehV19} where 
port-Hamiltonian models in state-space form were shown to contain optimally
robust models but no construction was given explicitly. 

\section{Concluding remarks} \label{sec:conclude}

In this paper we have provided a definition of the passivity radius of a generalized state-space system of a rational matrix and also showed how to optimize it,
provided a particular normalization constant $c$ is chosen appropriately.
Constructing a generalized state-space realization with maximal passivity radius can be used to improve the robustness of that model, against perturbations of the model parameters.

\end{document}